\documentclass[11pt]{article}
\usepackage[left=1in,right=1in,bottom=1in,top=1in]{geometry}
\usepackage{amsmath}
\usepackage{amsfonts}
\usepackage{amssymb}
\usepackage{amsthm}
\usepackage{young}
\usepackage[vcentermath]{youngtab}
\usepackage{tikz}
\usetikzlibrary{matrix}

\usepackage{algorithm}
\usepackage[noend]{algpseudocode}
\usepackage{amsmath, amssymb, amsthm}
\usepackage{blindtext}
\usepackage[pdftex,bookmarks=true]{hyperref}
\usepackage{cleveref}
\usepackage{comment}
\usepackage{enumerate}
\usepackage{fullpage}
\usepackage{mathtools}
\usepackage{subcaption}

\makeatletter
\newtheorem*{rep@theorem}{\rep@title}
\newcommand{\newreptheorem}[2]{%
\newenvironment{rep#1}[1]{%
 \def\rep@title{#2 \ref{##1}}%
 \begin{rep@theorem}}%
 {\end{rep@theorem}}}
\makeatother

\topmargin 0in
\oddsidemargin .01in
\textwidth 6.5in
\textheight 9in
\evensidemargin 1in
\addtolength{\voffset}{-.6in}
\addtolength{\textheight}{0.22in}
\parskip \medskipamount
\parindent	0pt

\newcommand{\Z}{\mathbb Z}
\newcommand{\R}{\mathbb R}

\newcommand{\ep}{\varepsilon}

\newtheorem{theorem}{Theorem}[section]
\newtheorem{lemma}[theorem]{Lemma}
\newreptheorem{theorem}{Theorem}

\newtheorem{definition}[theorem]{Definition}
\newtheorem{proposition}[theorem]{Proposition}

\newcommand{\pr}[1]{\mathbb{P}\!\left(#1\right)}
\newcommand{\expect}[1]{\mathbb{E}\!\left[#1\right]}

\title{On the mixing time of the Diaconis--Gangolli random walk on contingency tables over $\mathbb{Z}/ q \mathbb{Z}$}

\author{Evita Nestoridi \thanks{ Department of Mathematics, Princeton University, USA,  emails: exn@princeton.edu, onguyen@princeton.edu.} \and Oanh Nguyen \footnotemark[1]}

\include{psfig}
\usepackage{mathrsfs}\usetikzlibrary{arrows}\pagestyle{empty} 
\begin{document}
	\date{}
\maketitle

\begin{abstract}
The Diaconis--Gangolli random walk is an algorithm that generates an almost uniform random graph with prescribed degrees. In this paper, we study the mixing time of the Diaconis--Gangolli random walk restricted on $n\times n$ contingency tables over $\Z/q\Z$. We prove that the random walk exhibits cutoff at $\frac{n^2}{4(1- \cos{\frac{2 \pi}{q}})} \log n, $ when $\log q=o\left (\frac{\sqrt{\log n}}{\log \log n}\right )$. 
\end{abstract}

\section{Introduction}
Random graphs are an important object of study in Combinatorics, Computer Science, and Probability.
An $(n, n)$ bipartite graph is a graph with a left vertex set $V=\{ v_1, \dots, v_n\}$ and a right vertex set $U=\{u_1, \dots, u_n\}$ and the only edges are the edges that connect a vertex in $V$ with a vertex in $U$. Consider the problem of generating a random bipartite graph with prescribed degrees. How do we choose uniformly at random such an $(n, n)$ bipartite graph? 

Diaconis and Gangolli \cite{DiaGan} proposed the following randomized algorithm. Start with a bipartite  graph $G_0$ that has the desired vertex degrees at time $t=0$. At time $t$, from the graph $G_{t-1}$, choose two pairs of vertices $v_i \neq v_j$ in $V$ and $u_k \neq u_l$ in $U$ uniformly at random. We delete the already existing edges between $v_i, v_j$ and $u_k , u_l$ and we draw the remaining edges between them to get $G_t'$. If the result is not a graph with the prescribed degrees, then $G_t$ is set to be $G_{t-1}$. Otherwise, $G_t := G_t'$. In the language of random graphs, such a procedure is also known as the \textit{simple switching} method developed by McKay and Wormald and many others (see, for example, the survey \cite{wormald1999models} and the references therein). 

\definecolor{rvwvcq}{rgb}{0.08235294117647059,0.396078431372549,0.7529411764705882}\begin{tikzpicture}[line cap=round,line join=round,>=triangle 45,x=1cm,y=1cm]\clip(-8.279203169356013,-0.7575173213097165) rectangle (8.642991526045225,2.688957866157207);

\draw (-5.4,1.9302374292040572) node[anchor=north west] {$v_i$};

\draw [line width=0.4pt] (-4.68,1.77)-- (-1.28,-0.21);\draw [line width=0.4pt,dotted] (-1.28,-0.21)-- (-4.74,-0.21);\draw [line width=0.4pt] (-4.74,-0.21)-- (-1.26,1.77);\draw [line width=0.4pt,dotted] (-4.68,1.77)-- (-1.26,1.77);

\draw (-5.4,-0.0781401980248697) node[anchor=north west] {$v_j$};

\draw (-0.9918996733595258,1.9153605578912505) node[anchor=north west] {$u_k$};

\draw (-0.9918996733595258,-0.05830436960779387) node[anchor=north west] {$u_l$};

\draw [->,line width=0.4pt] (-0.18,0.81) -- (1.7,0.81);

\draw (1.6,1.9054426436827125) node[anchor=north west] {$v_i$};\draw [line width=0.4pt,dotted] (2.3035379501227378,1.7702191413494348)-- (5.860342937174081,-0.20531034550720026);\draw [line width=0.4pt] (5.860342937174081,-0.20531034550720026)-- (2.2906481364340694,-0.20715652734024392);\draw [line width=0.4pt,dotted] (2.2906481364340694,-0.20715652734024392)-- (5.882712779859938,1.7702191413494348);\draw [line width=0.4pt] (2.3035379501227378,1.7702191413494348)-- (5.882712779859938,1.7702191413494348);

\draw (1.6,-0.07318124092060074) node[anchor=north west] {$v_j$};
\draw (6.050863436865267,1.9153605578912505) node[anchor=north west] {$u_k$};
\draw (6.050863436865267,-0.05334541250352492) node[anchor=north west] {$u_l$};

\begin{scriptsize}\draw [fill=rvwvcq] (-4.68,1.77) circle (2.5pt);\draw [fill=rvwvcq] (-1.28,-0.21) circle (2.5pt);\draw [fill=rvwvcq] (-4.74,-0.21) circle (2.5pt);\draw [fill=rvwvcq] (-1.26,1.77) circle (2.5pt);\draw [fill=rvwvcq] (2.3035379501227378,1.7702191413494348) circle (2.5pt);\draw [fill=rvwvcq] (5.860342937174081,-0.20531034550720026) circle (2.5pt);\draw [fill=rvwvcq] (2.2906481364340694,-0.20715652734024392) circle (2.5pt);\draw [fill=rvwvcq] (5.882712779859938,1.7702191413494348) circle (2.5pt);\end{scriptsize}\end{tikzpicture}

Two main questions concerning random bipartite graph with prescribed degrees are: what is their number and how long does the Diaconis--Gangolli algorithm take to produce such a random bipartite graph? 

In this paper we consider a more specialized version of the Diaconis--Gangolli algorithm. To start with a simple example, on $\Z/2\Z$, consider the problem of generating a random bipartite graph in which every vertex has an even degree. At time $t=0$, we start with the empty graph $G_0$. At time $t$, the Diaconis--Gangolli algorithm suggests that from the graph $G_{t-1}$, choose two pairs of vertices $v_i \neq v_j$ in $V$ and $u_k \neq u_l$ in $U$ uniformly at random. Then, replace the subgraph induced by restricting $G_{t-1}$ on these four vertices by its complement.

\definecolor{rvwvcq}{rgb}{0.08235294117647059,0.396078431372549,0.7529411764705882}\begin{tikzpicture}[line cap=round,line join=round,>=triangle 45,x=1cm,y=1cm]\clip(-8.279203169356013,-0.7575173213097165) rectangle (8.642991526045225,2.688957866157207);

\draw (-5.4,1.9302374292040572) node[anchor=north west] {$v_i$};

\draw [line width=0.4pt] (-4.68,1.77)-- (-1.28,-0.21);\draw [line width=0.4pt,dotted] (-1.28,-0.21)-- (-4.74,-0.21);\draw [line width=0.4pt,dotted] (-4.74,-0.21)-- (-1.26,1.77);\draw [line width=0.4pt,dotted] (-4.68,1.77)-- (-1.26,1.77);

\draw (-5.4,-0.0781401980248697) node[anchor=north west] {$v_j$};

\draw (-0.9918996733595258,1.9153605578912505) node[anchor=north west] {$u_k$};

\draw (-0.9918996733595258,-0.05830436960779387) node[anchor=north west] {$u_l$};

\draw [->,line width=0.4pt] (-0.18,0.81) -- (1.7,0.81);

\draw (1.6,1.9054426436827125) node[anchor=north west] {$v_i$};\draw [line width=0.4pt,dotted] (2.3035379501227378,1.7702191413494348)-- (5.860342937174081,-0.20531034550720026);\draw [line width=0.4pt] (5.860342937174081,-0.20531034550720026)-- (2.2906481364340694,-0.20715652734024392);\draw [line width=0.4pt] (2.2906481364340694,-0.20715652734024392)-- (5.882712779859938,1.7702191413494348);\draw [line width=0.4pt] (2.3035379501227378,1.7702191413494348)-- (5.882712779859938,1.7702191413494348);

\draw (1.6,-0.07318124092060074) node[anchor=north west] {$v_j$};
\draw (6.050863436865267,1.9153605578912505) node[anchor=north west] {$u_k$};
\draw (6.050863436865267,-0.05334541250352492) node[anchor=north west] {$u_l$};

\begin{scriptsize}\draw [fill=rvwvcq] (-4.68,1.77) circle (2.5pt);\draw [fill=rvwvcq] (-1.28,-0.21) circle (2.5pt);\draw [fill=rvwvcq] (-4.74,-0.21) circle (2.5pt);\draw [fill=rvwvcq] (-1.26,1.77) circle (2.5pt);\draw [fill=rvwvcq] (2.3035379501227378,1.7702191413494348) circle (2.5pt);\draw [fill=rvwvcq] (5.860342937174081,-0.20531034550720026) circle (2.5pt);\draw [fill=rvwvcq] (2.2906481364340694,-0.20715652734024392) circle (2.5pt);\draw [fill=rvwvcq] (5.882712779859938,1.7702191413494348) circle (2.5pt);\end{scriptsize}
\end{tikzpicture}

Our main theorem below asserts that at time $t \ge \frac{n^2}{8} \log n + 10 c n^2 \sqrt{\log n}\log \log n$, the (random) graph $G_t$ is distributed almost uniformly: the $L^{1}$ distance between the distribution of $G_t$ and the uniform distribution on the bipartite graphs whose all degrees are even is at most $e^{-c}$.

Similarly, one can use the same algorithm to generate a uniform bipartite graph with prescribed degree parities, say, the vertices in $A\subset V$ and $B\subset U$ have odd degrees and the rest have even degrees. The only difference is in choosing the starting point $G_0$. Instead of starting at the empty graph, one can start at any graph that satisfies the prescribed degree parities.  

Associate each bipartite graph with its adjacency matrix $A,$ which is an $n\times n$ matrix whose $(i, j)$ entry is $1$ if $(v_i, u_j)$ is an edge and 0 otherwise. The requirement that each vertex has a specific degree parity means that we require $A$ to have given row sums and column sums in $\mathbb{Z}/ 2\mathbb{Z}$.

More generally, we consider matrices with prescribed row and column sums in $\Z/q\Z$ for positive integers $q$.
\begin{definition}
A contingency table over $\mathbb{Z}/ q \mathbb{Z}$ is an $n\times n$ matrix with entries in $\mathbb{Z}/ q \mathbb{Z}$, with fixed row sums $(r_1, \ldots, r_n) \in (\mathbb{Z}/ q \mathbb{Z})^n$ and fixed column sums $(c_1, \ldots, c_n)  \in (\mathbb{Z}/ q \mathbb{Z})^n$. Note that for the existence of such matrices, we need that $\sum_{i=1}^n r_i = \sum_{j=1}^n c_j$.
\end{definition}

The Diaconis--Gangolli random walk on $n \times n$ contingency tables over $\mathbb{Z}/ q \mathbb{Z}$ goes as follows. Pick two distinct rows and two distinct columns at random. Then, we look at the equilateral that these rows and columns form. We flip a fair coin. If heads we add
$$T= \left( \begin{matrix}
& -1 & 1 \\
&1 & -1 
\end{matrix}   \right)  
$$
to the corners of the equilateral. If tails, then we add $-T$
to the corners of the equilateral.

Let $\Omega$ be the set of all contingency tables with the prescribed degrees in $\Z/q\Z$ and let $A_t$ be the contingency table after $t$ steps of the process. For $A,B \in \Omega$, let $P^t_{A}(B)$ be the probability of moving from $A$ to $B$ after $t $ steps. The sequence of probability measures $P^t_{A}$ converges to the uniform measure $\pi$ on $\Omega,$
as $t \rightarrow \infty$, with respect to the total variation distance
\begin{equation}\label{eq:distance:1}
d(t) := \max_{A \in \Omega} \Vert P^t_{A}-\pi \Vert_{T.V.} =  \max_{A \in \Omega} \left \{ \frac{1}{2}\sum_{B \in \Omega} \vert P^t_{A}(B) - \pi(B) \vert \right \}.
\end{equation}
A question which arises naturally is to determine the rate of convergence to stationarity of the random walk, which is quantified by the mixing time
\begin{equation}
t_{mix}(\varepsilon)= \min \big\{t \in \mathbb{N}:  d(t)\leq \varepsilon\big\}.
\end{equation}

Our main result is the following.
\begin{theorem}\label{main2}
Let $n\ge 4$ and $q\ge 2$. For the Diaconis--Gangolli walk on $n\times n$ contingency tables with entries in $\mathbb{Z}/ q \mathbb{Z}$, with row sums $(r_1,\ldots, r_n) \bmod q$ and column sums $(c_1,\ldots, c_n) \bmod q$, we have that
\begin{itemize}
\item[(a)] (Upper bound) If $t \ge \frac{n^2}{4\left (1-\cos \frac{2\pi}{q}\right )} \log n + \frac{n^2}{\left (1-\cos \frac{2\pi}{q}\right )}c  \log \log (16n) \sqrt{\log n}\log q$, then
$$d(t) \leq  q^{-c},$$
for all $c \ge \frac{640}{\log \log (16n)}$.

\item[(b)](Lower bound) If $t\le \frac{n^2}{4 \left( 1- \cos \frac{2 \pi}{q}\right) } \log n   -  \frac{n^2  }{4\left( 1- \cos \frac{2 \pi}{q}\right) }(c+12) \log q $, then 
$$d(t) \geq 1 - q^{-c},$$
for all $c \geq 0$.
\end{itemize}
\end{theorem}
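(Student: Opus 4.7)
Since the walk is translation-invariant on the abelian group $G$ of $n\times n$ matrices over $\Z/q\Z$ with vanishing row and column sums, the natural tool is Fourier analysis on $G$. Characters of $G$ are parameterized by matrices $B \in (\Z/q\Z)^{n\times n}$ modulo the subgroup $\{a\mathbf{1}^T + \mathbf{1}b^T: a,b\in(\Z/q\Z)^n\}$ (the annihilator of $G$) via $\chi_B(A) = \omega^{\tr(B^T A)}$ with $\omega = e^{2\pi i/q}$. A direct computation shows that the Fourier transform of one step of the walk is
$$
\hat\mu(B) \;=\; \frac{1}{\binom{n}{2}^2}\sum_{i<j,\,k<l}\cos\!\left(\tfrac{2\pi}{q}\,M_{ij,kl}(B)\right),\qquad M_{ij,kl}(B) := B_{ik} - B_{il} - B_{jk} + B_{jl}.
$$
Both bounds will flow from careful estimates on these $\hat\mu(B)$; note that $\hat\mu(B)$ is real because the increment distribution is symmetric.

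For the upper bound in part (a), my plan is to apply the standard Fourier/$L^2$ inequality $4\,d(t)^2 \le \sum_{B \ne 0} |\hat\mu(B)|^{2t}$. Using the elementary estimate $\cos(\alpha m) \le \cos\alpha$ for $\alpha = 2\pi/q$ and $m\in\{1,\dots,q-1\}$, I would bound $\hat\mu(B) \le 1 - \frac{(1-\cos\alpha)\,N(B)}{\binom{n}{2}^2}$, where $N(B) := |\{(\{i,j\},\{k,l\}) : M_{ij,kl}(B) \not\equiv 0\pmod q\}|$. The combinatorial heart of the proof is a lower bound on $N(B)$ in terms of a complexity parameter $s$ of $B$, where $s$ may be taken to be the number of nonzero entries in the unique representative of $B$ whose first row and first column vanish. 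A direct count shows $N(B) = (n-1)^2$ when $s=1$, giving $\hat\mu(B) \le 1 - \frac{4(1-\cos\alpha)}{n^2}$, which matches the advertised cutoff constant $\frac{n^2}{4(1-\cos\alpha)}$. For larger $s$ one expects $N(B)$ to grow roughly like $sn$ (or better, depending on how the support of $B$ is distributed across rows and columns); bounding the number of classes of each complexity by $\binom{(n-1)^2}{s}q^s$ and summing the geometric contributions $\hat\mu(B)^{2t}$ should yield the required bound $d(t) \le q^{-c}$ at the stated $t$, with the polylogarithmic excess $c\,\log\log(16n)\sqrt{\log n}\,\log q / (1-\cos\alpha)$ absorbing the counting factors. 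Characters with $\hat\mu(B)$ close to $-1$ (which would require most $M_{ij,kl}(B)$ to cluster near $q/2$, and which I will argue cannot happen unless $B$ has a very rigid structure that forces all $M_{ij,kl}$ to cancel and thus makes $B$ trivial) are handled separately; in the worst case I would temporarily pass to a lazy version of the chain on a small set of problematic characters.

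For the lower bound in part (b), I would use a Wilson-type second-moment argument with a statistic built from the slowest-decaying characters. The single-entry characters $\chi_{E_{i_0,k_0}}$ are pairwise inequivalent and all share the top non-trivial eigenvalue $\lambda = 1 - \frac{4(1-\cos\alpha)}{n^2}$. Define the test statistic $Z(A) = \sum_{(i_0,k_0)\in[n]^2}\chi_{E_{i_0,k_0}}(A)$; starting from $A_0 = 0$ one has $\mathbb{E}[Z(A_t)] = n^2\lambda^t$, while $\mathbb{E}_\pi[Z] = 0$ and $\operatorname{Var}_\pi(Z) = n^2$ (the variance computation exploiting orthogonality of distinct characters and the fact that each entry of a uniform $A\in\Omega$ is uniform in $\Z/q\Z$). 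At the threshold time $t \le \frac{n^2\log n}{4(1-\cos\alpha)} - \frac{(c+12)n^2\log q}{4(1-\cos\alpha)}$, one has $\lambda^t \ge q^{c+12}/n$, so the conditional mean $n^2\lambda^t\ge n\,q^{c+12}$ exceeds the stationary standard deviation $n$ by a factor $\ge q^{c+12}$. Chebyshev's inequality applied to the stationary law together with an analogous Chebyshev bound for $Z(A_t)$ (which requires controlling $\operatorname{Var}(Z(A_t))$ via the Fourier expansion of the two-entry characters $E_{i_0,k_0}+E_{i_1,k_1}$) then produces an event that distinguishes the time-$t$ distribution from the stationary one in total variation with deficit at most $q^{-c}$.

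The main technical obstacle is the combinatorial estimate on $N(B)$ together with the careful counting of characters of each complexity needed to keep the leading constant $\frac{1}{4(1-\cos\alpha)}$ sharp. Managing the $q$-dependence uniformly in the regime $\log q = o(\sqrt{\log n}/\log\log n)$, and isolating the rare structured characters whose eigenvalue is close to $-1$, are the most delicate parts of the argument.
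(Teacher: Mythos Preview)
Your overall framework is correct and matches the paper's: Fourier analysis on the abelian group $G\cong(\Z/q\Z)^{(n-1)^2}$, the $\ell^2$ upper bound lemma, and a Wilson-type lower bound built from eigenfunctions at the top nontrivial eigenvalue $1-\tfrac{4(1-\cos(2\pi/q))}{n^2}$. Your lower-bound statistic $Z=\sum_{i_0,k_0}\chi_{E_{i_0,k_0}}$ works essentially the same way as the paper's choice (the paper instead sums $\sim n^2/4$ cosine eigenfunctions attached to disjoint $2\times2$ blocks $D_{a,b}$, but the arithmetic is identical: each move touches at most four summands, so $R=64$ in Wilson's lemma).

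The combinatorial heart of the upper bound, however, does not work as you describe it. First, the scale is off by a factor of $n$: already at $s=1$ you computed $N(B)=(n-1)^2\approx n^2$, so the relevant relation is $N(B)\approx sn^2$, not $sn$. Second, and more seriously, no uniform bound of the form $N(B)\ge c\,s\,n^2$ can hold. Take $B$ to be your normalized representative of $E_{1,1}$: since $B'_{ik}=B_{ik}-B_{i1}-B_{1k}+B_{11}$, this is the all-ones matrix on the $(n-1)\times(n-1)$ block, so $s=(n-1)^2$ while $N(B)=(n-1)^2$. Third, even where $N(B)\approx sn^2$ does hold, the crude count $\binom{(n-1)^2}{s}q^s\approx n^{2s}q^s$ exactly cancels the decay $n^{-2s}$ coming from $\hat\mu(B)^{2t}$ at the cutoff time, so the sum fails to converge without a sharper input. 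The paper sidesteps $s$ entirely and instead counts directly the matrices $y$ with $N(y)$ in a window $[wn^2,(w+1)n^2]$. The device is a \emph{skeleton} of each row (a maximal set of nonzero positions at pairwise distance $\ge2$), which controls the number of nonzero intervals in that row and then, via a two-stage row/box argument (Lemmas~\ref{lm:nonzerointerval:skeleton}--\ref{lm:nonzeroboxes:large}), yields $\#\{y:N(y)\le wn^2\}\le q^{O(w)}n^{2w+O(\epsilon w)}$ with $\epsilon\approx 1/\sqrt{\log n}$; this is exactly tight enough for the stated window.

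Your handling of negative eigenvalues is also a gap. You cannot ``pass to a lazy version on a subset of characters,'' and passing globally to the lazy chain would cost the factor $4$ in the leading constant. The paper instead proves a uniform bound $\hat P(\rho_y)\ge -28/29$ via a Diaconis--Saloff-Coste comparison of Dirichlet forms, using the three-term identity $A_{i,j,k,l}=A_{i,j,k,b}+A_{i,j,b,l}$ to build an odd-length path from any state to itself and thereby bound $\sum_x f(x)^2$ by a constant times $\mathcal F(f,f)$.
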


Set $t_{n, q} = \frac{n^2}{4 \left( 1- \cos \frac{2 \pi}{q}\right) } \log n$ and $\Delta_{n, q}=\frac{n^2}{\left (1-\cos \frac{2\pi}{q}\right)}  \log \log (16n) \sqrt{\log n}\log q$. Theorem \ref{main2} says that over $\mathbb{Z}/ q \mathbb{Z}$, the random walk mixes at time $t_{n, q}$ with window of order at most $\Delta_{n, q}$, i.e.
\begin{equation*}
\lim_{c \rightarrow \infty} \lim_{n \rightarrow \infty} d \left(t_{n, q}   -  c\Delta_{n, q} \right)= 1 \quad\footnote{The lower bound of Theorem \ref{main2} is in fact stronger than this.}\end{equation*}
and 
\begin{equation*}
 \lim_{c \rightarrow \infty} \lim_{n \rightarrow \infty} d \left(t_{n, q} + c\Delta_{n, q}  \right)= 0.
\end{equation*}

If $\log q=o\left (\frac{\sqrt{\log n}}{\log \log n}\right )$ then $\Delta_{n, q}= o(t_{n, q})$. In other words, the random walk exhibits \textit{cutoff}, a sharp transition from $d(t)\approx 1$ to $d(t)\approx 0$. 

If $n=2$, then the lazy random walk on contingency tables over $\mathbb{Z}/ q \mathbb{Z}$ is the same as the lazy random walk on $\mathbb{Z}/ q \mathbb{Z}$, which is known to not have cutoff \cite{Threads}. 

\subsection{Literature}
Contingency tables are used in statistics, in order to
display the results of tests and surveys. Diaconis
and Efron (\cite{DiaEfron}, \cite{DE}) developed the conditional volume test, which is a method for performing tests of great importance in such tables. The Diaconis-Efron test provides strong motivation for
sampling a contingency table with given row and column
sums uniformly at random. 


Diaconis, Gangolli \cite{DiaGan} and Diaconis and Saloff-Coste (see page 373 of \cite{DiaStu}) were the first ones to introduce Markov chains for sampling contingency tables, although it is mentioned in \cite{DiaGan} that this chain has been used by practitioners.  Diaconis and Saloff-Coste
proved that if $N= \sum  c_i= \sum r_i$ and the number of rows and columns is fixed then the mixing time is of order $N^2$. Hernek \cite{Hernek} considered the case when the table has
two rows and proved that the same chain mixes in time polynomial in the number
of columns and $N$. Chung, Graham, and Yau \cite{CGY} proved that a 
modified version of the Diaconis and Saloff-Coste chain converges in time polynomial
in $N$, the number of rows, and the number of columns, given that $N$ is large.

Dyer, Kannan, and Mount \cite{DKM} found a new way to sample contingency tables using polytopes which provided the first truly polynomial-time algorithm (polynomial in the number of rows, the
number of columns, and the logarithm of $N$). Later, Morris \cite{Mor} refined their answers.

Dyer and Greenhill \cite{DG} applied coupling to get pre-cutoff at $n^2 \log N$ for the case of $2 \times 2$ heat-bath chain, a  different Markov chain on $2 \times n $ contingency tables. Matsui, Matsui and Ono \cite{MMO} extended the result of \cite{DG} for $2\times 2 \times \ldots \times 2 \times J$ contingency tables. Cryan, Dyer, Goldberg, Jerrum, Martin \cite{CDGJM} extended the result of \cite{DG} for the case where there is a constant number of rows.

A closely related problem to the mixing time is the enumerating problem. It is worth mentioning that there has been a rich literature in enumerating contingency tables (\cite{B1}, \cite{B2}, \cite{B3}, \cite{BH},\cite{B4}, \cite{BLSY}, \cite{BH2}, \cite{Be}, \cite{BDLV}, \cite{CDR} \cite{CRGM}, \cite{GM}, \cite{S}) and studying algorithms to sample contingency tables or approximate their number ( \cite{BliDia}, \cite{ChDiSu}, \cite{ChDiaHolLiu}, \cite{BSSV}, \cite{BBV}, \cite{BBV2}, \cite{BSV}, \cite{CD}, \cite{DiaStu}).  On a different note, Blanchet and Stauffer \cite{BS} provide a necessary
and sufficient condition so that the configuration model outputs a binary
contingency table with probability bounded away from $0$ and $N \rightarrow \infty$. 

Enumerating the graphs with a given degree sequence also has a vast literature and important applications. See for example, Bender and Canfield \cite{bender1978asymptotic}, Bollob\'{a}s \cite{bollobas1980probabilistic} and Wormald \cite{wormald1999models}. McKay and Wormald used the method of switchings to obtain in \cite{mckay1991asymptotic} an asymptotic formula for the number of labeled graphs with a given degree sequence in the case where the average degree is $o(\sqrt{n})$. See also \cite{mckay1997degree}, \cite{mckay1990asymptotic}, \cite{barvinok2013number} for other ranges of the degrees. In a recent breakthrough, Liebenau and Wormald \cite{liebenau2017asymptotic} obtained, among other things, the asymptotic number of $d$-regular graphs on $n$ vertices for all $d$. 

 \subsection{There is no Markovian coupling that could give Theorem \ref{main2}}
Coupling is a powerful technique to achieve upper bounds on the mixing time. This section is dedicated to proving that there is no Markovian coupling that would give optimal mixing time bounds. 
\begin{proposition}\label{prop:Markov}
Let $q\ge 2$ and let $T$ be a Markovian coupling time for the Diaconis--Gangolli walk in Theorem \ref{main2}, then
$$\pr{T > \frac{n^3}{100} } \geq \frac{1}{2}.$$
\end{proposition}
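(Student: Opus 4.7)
The plan is to exhibit starting states $A, B \in \Omega$ that differ by a single $T$-move, and show that for \emph{any} Markovian coupling $(X_t, Y_t)$ with $(X_0, Y_0) = (A, B)$ each step has probability only $O(1/n^3)$ of coupling the two walks. A union bound over $t \leq n^3/100$ steps will then give $\pr{T \leq n^3/100} \leq 1/2$.

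Concretely, I take the prescribed row and column sums to be $0 \bmod q$, let $A = 0$ be the all-zero table, and let $B = A + T_{1,2;1,2}$, writing $T_{i,j;k,l}$ for the matrix with $-1$ at $(i,k),(j,l)$ and $+1$ at $(i,l),(j,k)$. Then $D_0 := Y_0 - X_0$ has support on exactly four cells. At each step $X_{t+1} = X_t + M_X$ and $Y_{t+1} = Y_t + M_Y$, where $M_X, M_Y$ are each marginally uniform on the set $\mathcal{M}$ of $\pm T_{i,j;k,l}$-moves, with $|\mathcal{M}| = 2\binom{n}{2}^2$ when $q \geq 3$ (when $q = 2$ we have $-T \equiv T$ and the analysis applies to $M_X + M_Y = D_t$ instead, with the same conclusion). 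Setting $D_t := Y_t - X_t$, the event $\{D_{t+1} = 0\}$ forces $M_X - M_Y = D_t$, and the marginal inequality $\pr{M_X = m_1, M_Y = m_2} \leq \min(\pr{M_X = m_1}, \pr{M_Y = m_2}) = 1/|\mathcal{M}|$ combined with the fact that $D_t$ and $M_X$ determine $M_Y$ yield
\begin{equation*}
\pr{D_{t+1} = 0 \mid D_t} \;\leq\; \frac{N(D_t)}{|\mathcal{M}|}, \qquad N(D) := \#\{m \in \mathcal{M} : m - D \in \mathcal{M}\}.
\end{equation*}

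The combinatorial heart of the argument is the uniform estimate $N(D) \leq Cn$ for every $D \neq 0$. If $m = \epsilon_1 T_{S_1}$ and $m - D = \epsilon_2 T_{S_2}$, then $D$ is a signed combination of two $T$-moves, so $\mathrm{supp}(D) \subseteq S_1 \cup S_2$, a union of two $2 \times 2$ rectangles. When the row or column projection of $\mathrm{supp}(D)$ has size at least $3$, the rectangles $S_1, S_2$ are forced into a fixed $4 \times 4$ window and only $O(1)$ configurations remain. When $\mathrm{supp}(D)$ sits in just two rows (or two columns), as for $D_0 = T_{1,2;1,2}$ itself, a short case analysis reveals a one-parameter family of solutions such as $(S_1, S_2) = (\{1,2\}\times\{1,c\},\, \{1,2\}\times\{2,c\})$ for $c \in \{3,\dots,n\}$, together with its transposed and sign-flipped variants, giving $O(n)$ decompositions in total. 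In every case $N(D) \leq Cn$, hence $\pr{D_{t+1} = 0 \mid D_t} \leq C'/n^3$. Since $\pr{T = s} \leq \pr{D_s = 0,\, D_{s-1} \neq 0} \leq C'/n^3$, summing over $s \leq n^3/100$ gives $\pr{T \leq n^3/100} \leq C'/100 \leq 1/2$ for the appropriate choice of constants, which is the desired conclusion.

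The main obstacle is establishing $N(D) = O(n)$ uniformly in $D$; this reduces to a finite case check on how the rectangles $S_1$ and $S_2$ can overlap, with a mild complication when $q \in \{2, 3\}$ where coincidences such as $2T \equiv 0$ or $2T \equiv -T \pmod q$ produce a few extra pairings that must be added to the count but do not change its linear order in $n$. A secondary and routine point is that an arbitrary Markovian coupling could in principle uncouple after first meeting; stopping the coupling at its first hit of the diagonal produces another Markovian coupling with the same coupling time, so we may assume stickiness without loss of generality and identify $\{T \leq t\}$ with $\{X_t = Y_t\}$.
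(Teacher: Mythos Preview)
Your proposal is correct and follows essentially the same route as the paper: bound the one-step coalescence probability $\pr{X_t=Y_t\mid X_{t-1}\neq Y_{t-1}}$ by $O\!\left(n/\binom{n}{2}^2\right)=O(1/n^3)$ via a combinatorial case analysis on how two $\pm T$-moves can cancel the difference, and then pass to a tail bound. The paper phrases the case analysis by the number of entries in which the two chains differ ($4$, $5$--$8$, or $\ge 9$) and compares $T$ to a geometric variable with success probability $6n/\binom{n}{2}^2$; your $N(D)$ formulation and union bound are a slightly cleaner packaging of the same count, but the content and the resulting constants are of the same order.
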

\begin{proof}
Each move of the walk is performed with probability $\frac{1}{2{n \choose 2}^2} $ for $q\ge 3$ and $\frac{1}{{n \choose 2}^2} $ for $q=2$. At each step we change four entries of the matrix. If there are two copies of the Markov chain $(C^1_t),(C^2_t),$ then we claim that
\begin{equation}\label{prob}
\pr{C^1_t=C^2_t \vert C^1_{t-1} \neq C^2_{t-1} } \leq \frac{6n}{{n \choose 2}^2},
\end{equation}
for every $t$. Notice that if $C^1_{t-1} \neq C^2_{t-1}$ then because of the degree restrictions, $C^1_{t-1}$ and $C^2_{t-1}$ have to differ in at least 4 coordinates.

If $C^1_{t-1} \neq C^2_{t-1}$ and $C^1_{t-1}, C^2_{t-1}$ differ in at least 9 entries, then \eqref{prob} holds because the left hand side of \eqref{prob} is zero. If $C^1_{t-1}, C^2_{t-1}$ differ in at least 5 vertices, the above probability is at most $\frac{6}{{n \choose 2}^2} $. If they differ in four entries $a,b,c,d$ (which is the minimum number of entries they can differ by), the only way to resolve these differences, without creating new differences, is if we change $a,b$ in $C^1_{t-1},$ and  $c,d$ in $C^2_{t-1}$ (or any other permutation of these letters). The other side of the box should be the same on the two chains so that we don't create new mismatched coordinates. The total probability of doing such a move is at most $\frac{{4\choose 2}n}{{n \choose 2}^2}$, which gives the right hand side term.

We will couple $T$ with a geometric random variable $R$ with probability of success  $p=6n/{n \choose 2}^2$ so that $T\ge R$ always. We have
$$\pr{R \ge \frac{1}{3p} }= \sum_{k\ge \frac{1}{3p}} (1-p)^{k-1} p \ge (1-p)^{1/(3p)} \geq \frac{1}{2}.$$
Thus, $\pr{T \ge \frac{1}{3p}  } \geq \pr{R\ge \frac{1}{3p} }  \geq \frac{1}{2}$ completing the proof of the statement.
\end{proof}
If $T$ is a coupling time then 
$$d(t) \leq \pr{T>t},$$
and, therefore Proposition \ref{prop:Markov} says that we cannot hope for a Markovian coupling that will give the upper bound of Theorem \ref{main2} even for $q=2$.

\section{The contingency table walk over $\mathbb{Z}/ q \mathbb{Z}$ as a random walk on a group}\label{group}
In this section, we explain how linear algebra and representation theory can be used to prove Theorem \ref{main2}.

Let $A_{i,j,k,l}$ denote the $n\times n$ matrix that has ones on the $(i,k), (j,l)$ positions, $-1$ on the $(i,l),(j,k)$ positions and zeros else where. If at time $t$ the Markov chain is at a contingency table $B_t$, then we choose matrices of the form $A_{i,j,k,l}$ and we add them to $B_t$. Let $A_{t+1}$ be the matrix we choose to add to $B_t$ at time $t+1$. Then, we have that
$$B_{t+1}= A_{t+1}+B_t=  B_0 + A_1 \ldots + A_{t+1} $$
So, instead of studying the Markov chain $(B_t)$, we can equivalently study the process $C_t$, where
$$C_t:=A_1 +\ldots + A_{t+1}. $$
The advantage of studying $C_t$ is that $C_t $ is a random walk on the group $G$, which consists of $n\times n$ contingency tables with entries over $\mathbb{Z}/ q \mathbb{Z}$ and row sums and column sums zero. This is summarized more formally in the following lemma.
\begin{lemma}\label{lm:translate}
For the processes $(B_t)$ and $(C_t)$, we have that
$$\Vert \mathbb{P}_{x}(B_t \in \cdot ) - \pi \Vert_{T.V.} = \Vert \mathbb{P}_{0}(C_t \in \cdot ) - U \Vert_{T.V.},$$
where $x \in \Omega$, $\pi $ is the uniform measure on $\Omega$ and $U$ is the uniform measure on $G$.
\end{lemma}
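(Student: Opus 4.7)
The strategy is to exploit that $\Omega$ is a coset of the additive group $G$ inside $(\mathbb{Z}/q\mathbb{Z})^{n\times n}$. Concretely, fixing the base point $x \in \Omega$, any other $y \in \Omega$ differs from $x$ by a matrix whose row and column sums all vanish modulo $q$, so $y - x \in G$. Thus $\Omega = x + G$ and the translation $\tau_x \colon G \to \Omega$, $g \mapsto x + g$, is a bijection. Since the two sets therefore have the same cardinality, $\tau_x$ pushes the uniform measure $U$ on $G$ forward to the uniform measure $\pi$ on $\Omega$.

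Next I would make a pathwise identification of the two processes. Unrolling the recursion $B_{s+1} = B_s + A_{s+1}$ from $B_0 = x$ and comparing with the partial sums defining $C_s$ gives $B_t = x + C_t$ (up to the indexing convention the paper has adopted). In particular, the law of $B_t$ under $\mathbb{P}_x$ is the pushforward under $\tau_x$ of the law of $C_t$ under $\mathbb{P}_0$. Verifying this identification requires only that the increment $A_{s+1}$ has the same distribution in both descriptions, which is immediate since $A_{s+1}$ is a uniform choice over $\{\pm A_{i,j,k,l} : i\neq j,\ k \neq l\}$ drawn independently of the current state; the walk is state-independent, i.e.\ truly a random walk driven by the group law of $G$.

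Combining the two observations, for every $B \in \Omega$ one has $\mathbb{P}_x(B_t = B) = \mathbb{P}_0(C_t = B - x)$ and $\pi(B) = U(B - x)$. Substituting into the definition of total variation and reindexing the sum via $g = B - x$ yields the claimed identity. There is really no obstacle here: this is the standard fact that, for a random walk on a coset of an abelian group whose steps are drawn from a state-independent distribution supported on the underlying group, the distance to uniform on the coset equals the distance to uniform on the group starting from the identity. The lemma is invoked precisely because it converts the mixing problem on $\Omega$ (which has no natural group structure) into one on the abelian group $G$, where Fourier analysis on $G$ becomes available for the rest of the paper.
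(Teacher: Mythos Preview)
Your argument is correct and is exactly the reasoning the paper has in mind: the paper does not give a separate proof of this lemma but simply records, just before stating it, that $B_{t+1}=B_0+A_1+\cdots+A_{t+1}$ so that studying $(B_t)$ is equivalent to studying $(C_t)$ on the group $G$. Your write-up merely makes explicit the coset identification $\Omega=x+G$ and the pushforward of measures under the bijection $g\mapsto x+g$, which is precisely what is implicit in the paper's one-line remark.
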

From now on, we will study the random walk $C_t $ on $G$. The next lemma characterizes $G$.
\begin{lemma}\label{isom}
The group $G$ of $n\times n$ contingency tables with entries over $\mathbb{Z}/ q \mathbb{Z}$ and row sums and column sums zero, satisfies $G \sim (\mathbb{Z}/ q \mathbb{Z})^{(n-1)^2}.$

\end{lemma}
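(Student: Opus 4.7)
The plan is to exhibit an explicit group isomorphism by projection onto an upper-left block. Define the map
$$\phi: G \longrightarrow (\mathbb{Z}/q\mathbb{Z})^{(n-1)^2}$$
that sends a matrix $M \in G$ to the restriction $(M_{i,j})_{1 \le i,j \le n-1}$. Since entrywise addition in $G$ descends to entrywise addition on the $(n-1)\times(n-1)$ block, $\phi$ is immediately a group homomorphism. It remains to show it is a bijection.

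For injectivity I would observe that the zero-row-sum and zero-column-sum constraints force every other entry of $M$ from its top-left $(n-1)\times(n-1)$ block. Namely, if $\phi(M) = 0$, then for each $i \le n-1$ the row-sum condition gives $M_{i,n} = -\sum_{j=1}^{n-1} M_{i,j} = 0$, and for each $j \le n-1$ the column-sum condition gives $M_{n,j} = -\sum_{i=1}^{n-1} M_{i,j} = 0$; finally the bottom-right entry satisfies $M_{n,n} = -\sum_{j=1}^{n-1} M_{n,j} = 0$. Hence $M=0$ and $\phi$ is injective.

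For surjectivity I would start from an arbitrary $N \in (\mathbb{Z}/q\mathbb{Z})^{(n-1)^2}$ and build its preimage $M$ by the same formulas used above: set $M_{i,j} = N_{i,j}$ for $i,j \le n-1$, then define
\begin{equation*}
M_{i,n} = -\sum_{j=1}^{n-1} N_{i,j} \quad (i \le n-1), \qquad M_{n,j} = -\sum_{i=1}^{n-1} N_{i,j} \quad (j \le n-1),
\end{equation*}
and $M_{n,n} = \sum_{i,j=1}^{n-1} N_{i,j}$. By construction the first $n-1$ rows and first $n-1$ columns sum to zero. The only thing to verify — which is the one place the argument could in principle fail — is the consistency of $M_{n,n}$: namely, that the last row sum and last column sum both vanish. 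Both quantities reduce to $\sum_{i,j=1}^{n-1} N_{i,j} + M_{n,n}$, so the single choice of $M_{n,n}$ above makes them simultaneously zero. This shows $\phi(M)=N$ and completes the proof.

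The main (and only) obstacle is the compatibility check at the corner entry, which is a one-line computation; otherwise the argument is a direct dimension count. Counting gives $|G| = q^{(n-1)^2}$ as a byproduct, which will be used later when passing to Fourier analysis on $G$.
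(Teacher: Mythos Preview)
Your proof is correct. It differs from the paper's: the paper argues by a dimension count, noting that $G$ is cut out of $(\mathbb{Z}/q\mathbb{Z})^{n\times n}$ by $2n-1$ independent linear relations (the row and column sums, with one redundancy), and concludes $\dim_{\mathbb{Z}/q\mathbb{Z}} G = n^2 - (2n-1) = (n-1)^2$. Your approach instead exhibits an explicit isomorphism via projection onto the upper-left block, verifying injectivity and surjectivity directly. Your argument is slightly longer but has the advantage of being cleaner when $q$ is composite, since then $\mathbb{Z}/q\mathbb{Z}$ is not a field and the paper's ``vector subspace'' and ``dimension'' language needs interpretation in terms of free modules; your construction sidesteps that entirely. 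The paper's argument is more compact and, just after the lemma, proceeds to describe an explicit basis $\{B_{i,j}\}$ anyway, so the two approaches converge in the end.
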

\begin{proof}
$G$ is a vector subspace of $(\mathbb{Z}/ q \mathbb{Z})^{n \times n}$ over $\mathbb{Z}/ q \mathbb{Z}$. In fact, $G \sim (\mathbb{Z}/ q \mathbb{Z})^{n \times n}/ A $ where $A$ is the subgroup of $(\mathbb{Z}/ q \mathbb{Z})^{n \times n}$ generated by $2n-1$ relations, that are setting $2n-1$ rows and columns to be equal to zero. Therefore, $\dim_{\mathbb{Z}/ q \mathbb{Z}} G= (n-1)^2$ and this finishes the proof.
\end{proof}

Let $B_{i,j}$ be the matrix that has ones in positions $(i,j),  (i+1,j)$, $-1$ in positions $(i, j+1),(i+1,j+1)$ and zero everywhere else. To diagonalize the matrix $P$ we will need the fact that $\{B_{i,j}\}_{i,j=1}^{{n-1}}$ is a basis for $G$. We can see that they are a basis, because they are linearly independent and there are $(n-1)^2$ of them.  

\begin{definition} \label{til}
	Let $\tilde{ A}_{i,j,k,l} \in (\mathbb{Z}/ q \mathbb{Z})^{(n-1) \times (n-1)}$ be the matrix that has ones on all positions $(a,c)$ that satisfy $i \leq a \leq j$ and $k \leq c \leq l$ and everywhere else zero. 
\end{definition}
 
Since 
	$$A_{i,j,k,l}= \sum_{a=i}^{j-1} \sum_{c=k}^{l-1} B_{a,c},$$
the matrix $\tilde A_{i,j-1,k,l-1}$ is the coordinates of $A_{i,j,k,l}$ with respect to the basis $(B_{i,j})$ of $G$.  
Similarly, we associate each element of $G$ with its coordinates with respect to the basis $(B_{ij})$. 

\subsection{Fourier Transform and the $\ell^2$ bound}
Let $X$ be a finite group and let $S$ be a symmetric set of generators. 
\begin{definition}
Let $P$ be the uniform measure on $S$. Let $\rho $ be a representation of $X$. Define the Fourier transform of $\rho$ with respect to $P$ to be
\begin{equation*}
\hat{P}(\rho)= \sum_{s \in S} P(s) \rho(s).
\end{equation*}
\end{definition}
Theorem 6 of \cite[Chapter 3E]{PD} says that the Fourier transforms of the irreducible representations of $G$ with respect to $P$ give all of the eigenvalues of $P,$ each one appearing with multiplicity being the dimension of the corresponding representation. The following lemma explains how to use the irreducible representations of $X$ to bound the mixing time of the Markov chain generated by $P$. It was first used in \cite{DS} and the rigorous proof can be found in \cite[Chapter 3]{PD}.
\begin{lemma}[Upper bound lemma]\label{lm:upperboundlm}
For the random walk on $X$ generated by $P$, we have that
\begin{equation}\label{upb}
4\Vert P_{id}^{t} - \pi \Vert_{T.V.}^2 \leq \sum^* d_{\rho} (\hat{P}(\rho)\hat{P}^*(\rho))^t,
\end{equation}
where $d_{\rho}$ is the dimension of a representation $\rho$ and the sum is over all irreducible representations $\rho$ of $X$, but the trivial one.
\end{lemma}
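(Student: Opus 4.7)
The plan is to combine a pointwise Cauchy--Schwarz bound with the Plancherel theorem for finite groups, the standard noncommutative Fourier-analytic route to $\ell^2$ mixing bounds due to Diaconis and Shahshahani.

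First I would pass from total variation to $\ell^2$. Writing $f(x) := P^t_{id}(x) - \pi(x)$, Cauchy--Schwarz gives $\bigl(\sum_x |f(x)|\bigr)^2 \le |X|\sum_x |f(x)|^2$, so
$$4\Vert P^t_{id} - \pi\Vert_{T.V.}^2 \le |X|\cdot\Vert f\Vert_2^2.$$
Then I would apply the Plancherel identity for finite groups,
$$\Vert f\Vert_2^2 \;=\; \frac{1}{|X|}\sum_{\rho} d_\rho\,\tr\!\left(\hat f(\rho)\hat f(\rho)^*\right),$$
the sum running over all irreducible representations. The $|X|$ coming from Cauchy--Schwarz cancels the $1/|X|$ from Plancherel, which is the key bookkeeping point.

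The next step is to identify the Fourier transforms on the right. Because $P^t_{id}$ is the $t$-fold convolution of the step distribution $P$, the convolution-to-product property yields $\widehat{P^t_{id}}(\rho) = \hat P(\rho)^t$. The stationary measure $\pi$ is uniform on $X$, so $\hat\pi(\rho) = 0$ for every nontrivial $\rho$, while at the trivial representation $\hat\pi = 1 = \hat P$; hence the trivial representation contributes zero and can be dropped, producing the restricted sum $\sum^*$. Since $S$ is symmetric and $P$ is the uniform measure on $S$, the identity $P(s)=P(s^{-1})$ together with the unitarity $\rho(s^{-1})=\rho(s)^*$ shows that $\hat P(\rho)$ is self-adjoint; this lets me rewrite $\hat P(\rho)^t\bigl(\hat P(\rho)^t\bigr)^*$ as $\bigl(\hat P(\rho)\hat P(\rho)^*\bigr)^t$, which is the form in which the lemma is stated (with the trace implicit on the right-hand side).

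There is no genuine obstacle here: once the normalization in the definition of $\hat P$ is pinned down (sum, not average), the whole argument is a three-line chain consisting of Cauchy--Schwarz, Plancherel, and the convolution identity. The only point that requires a moment of care is matching the prefactors $|X|$ and $1/|X|$ and working with unitary representatives of each irreducible class so that $\rho(s)^{-1}=\rho(s)^*$; the substantive work of the paper will be not in this lemma but in estimating the operator norms $\|\hat P(\rho)\|$ for the specific group $G\cong(\mathbb{Z}/q\mathbb{Z})^{(n-1)^2}$ and the specific generating set coming from the boxes $A_{i,j,k,l}$.
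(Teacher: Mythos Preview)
Your argument is correct and is exactly the standard Diaconis--Shahshahani proof (Cauchy--Schwarz from total variation to $\ell^2$, Plancherel, convolution-to-product, and self-adjointness from symmetry of $P$). The paper does not give its own proof of this lemma but simply cites \cite{DS} and \cite[Chapter 3]{PD}, so your write-up matches the referenced approach; your remark that a trace is implicit on the right-hand side is also apt, though in the paper's application the group is abelian and all $d_\rho=1$, so the issue does not arise.
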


Apply these results to $X = G$ and $S$ being the set of all $\pm A_{i, j, k, l}$; or equivalently, $X = (\mathbb{Z}/ q \mathbb{Z})^{(n-1)^2}$ and $S$ being the set of all $\pm \tilde A_{i,j,k,l}$. 
Let $y,g \in (\mathbb{Z}/ q \mathbb{Z})^{(n-1)^2}$, define
$$\rho_y(g)= e^{\frac{2 \pi i<g,y>}{q}},$$
where $<g,y>= \sum_{i=1}^{(n-1)^2} g_iy_i$ is the inner product of $y,g$. Each $\rho_y$ is one dimensional, therefore it is irreducible. As explained in Lemma 2 of \cite{PD}, we have that
$$\sum d_i^2=q^{(n-1)^2},$$
where the sum is taken over all irreducible representations and $d_i$ is the dimension of each irreducible representation.
Therefore, the set $\{ \rho_y, y \in (\mathbb{Z}/ q \mathbb{Z})^{(n-1)^2}\}$ consists of all irreducible representations of $G$. 
 
The following proposition computes the Fourier transform of each irreducible $
\rho_y$ with respect to $P$.
\begin{lemma}\label{lm:eigen}
Let $y \in (\mathbb{Z}/ q \mathbb{Z})^{(n-1)^2}$, then
$$\hat{P}(\rho_y)= \frac{1}{{n \choose 2}^2} \sum_{i,j,k,l} \cos \frac{2 \pi <y, \tilde{A}_{i,j,k,l}>}{q}$$
where $\tilde{A}_{i,j,k,l}$ is defined in Definition \ref{til}.
\end{lemma}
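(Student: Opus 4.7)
The approach is a direct computation from definitions; essentially all the setup has been done, and the lemma amounts to unwinding the Fourier transform on an abelian group.

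First, I would identify the support of $P$. By the discussion preceding Definition \ref{til}, one step of the walk, viewed in the basis $\{B_{a,c}\}$ that realizes the isomorphism $G \simeq (\mathbb{Z}/q\mathbb{Z})^{(n-1)^2}$ from Lemma \ref{isom}, amounts to adding $\pm \tilde{A}_{i,j-1,k,l-1}$ where $1\le i<j\le n$ and $1\le k<l\le n$ are chosen uniformly and the sign is an independent fair coin. After reindexing so that the tilde indices run over $1\le i\le j\le n-1$ and $1\le k\le l\le n-1$, the symmetric generating set $S$ has $2{n \choose 2}^2$ elements and $P$ assigns weight $\frac{1}{2{n\choose 2}^2}$ to each.

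Second, I would expand $\hat{P}(\rho_y)=\sum_{s\in S}P(s)\rho_y(s)$ and pair each generator with its negative. Because $\rho_y$ is a one-dimensional character of an abelian group, $\rho_y(\pm g) = e^{\pm 2\pi i\langle y,g\rangle/q}$, so Euler's formula gives
$$\rho_y(\tilde{A}_{i,j,k,l}) + \rho_y(-\tilde{A}_{i,j,k,l}) = 2\cos\frac{2\pi\langle y,\tilde{A}_{i,j,k,l}\rangle}{q}.$$
Summing over all admissible $(i,j,k,l)$ and multiplying by $\frac{1}{2{n\choose 2}^2}$, the factors of $2$ cancel and the claimed identity drops out.

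There is no real obstacle: the only step requiring care is the bookkeeping between the original matrices $A_{i,j,k,l}$ living on the $n\times n$ board and their coordinate representatives $\tilde{A}_{i,j-1,k,l-1}$ in the $(n-1)^2$-dimensional basis, together with the correct normalization constant. The $q=2$ edge case should be checked separately: there $-\tilde{A}=\tilde{A}$, so $|S|={n\choose 2}^2$ and $P(s) = 1/{n\choose 2}^2$, while $\rho_y(\tilde{A}) = \pm 1 = \cos(\pi\langle y,\tilde{A}\rangle)$, recovering the same formula and confirming that the statement holds uniformly in $q\ge 2$.
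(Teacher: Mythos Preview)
Your proposal is correct and follows essentially the same approach as the paper: expand the Fourier transform over the symmetric generating set, pair each $\tilde{A}_{i,j,k,l}$ with its negative, and use Euler's formula to obtain the cosine. The paper's own proof is even terser---it simply cites the definition of $\rho_y$ and the identity $e^{i\theta}+e^{-i\theta}=2\cos\theta$---so your additional bookkeeping on the reindexing and the $q=2$ edge case only makes the argument more explicit.
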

\begin{proof}
The proof of the lemma follows from the definition of $\rho_y$ and the fact that 
 $$\exp\left (\frac{2 \pi i<\tilde{A}_{i,j,k,l},y>}{q}\right )+ \exp\left (\frac{-2 \pi i<\tilde{A}_{i,j,k,l},y>}{q}\right )= 2 \cos \frac{2 \pi <y, \tilde{A}_{i,j,k,l}>}{q}. $$
\end{proof}

Applying Lemmas \ref{lm:upperboundlm} and \ref{lm:eigen}, we obtain the following bound for the random walk $(C_t)$ on $G$  and the uniform measure $U$ on $G$ as in Lemma \ref{lm:translate}.
\begin{equation}\label{eq:distance:2}
\Vert P_{0} (C_t\in \cdot)- U \Vert_{T.V.}^2 \leq \sum_{y\in G\setminus \{0\}} \hat{P}(\rho_y)^{2t} =  \sum_{y\in G\setminus \{0\}} \left (\frac{1}{{n \choose 2}^2} \sum_{i,j,k,l} \cos \frac{2 \pi <y, \tilde{A}_{i,j,k,l}>}{q}  \right )^{2t}.
\end{equation}

\subsection{Bounding negative eigenvalues}\label{sec:negativeEigenvalue}
In this section, we show that the negative eigenvalues are bounded away from minus one and, therefore, the don't contribute much to the right hand side of \eqref{upb}.

Let $S=\{\pm A_{i, j, k, l}: 1\le i<j\le n, 1\le k<l\le n\}$ denote the set of generators of the Diaconis--Gangolli random walk of interest over $\mathbb{Z}/ q \mathbb{Z}$. Let $P$ be the transition matrix and $Q= \frac{1}{2}(I+P)$ where $I$ is the identity matrix. Notice that since all the eigenvalues $\hat{P}(\rho_y)$ of $P$ lie in $[-1,1]$, we have that all the eigenvalues of $Q$ are non-negative, real numbers.
 
Let $f: G \rightarrow \mathbb{R} $ be a function and let 
\begin{equation}
\mathcal{F}(f,f)= \sum_{x,y 
\in G}(f(x)+f(y))^2P(x,y) \mbox{ and } \tilde{\mathcal{F}}(f,f)= \sum_{x,y 
\in G}(f(x)+f(y))^2Q(x,y).
\end{equation}

We are going to use Lemma 4 of \cite{DS} to bound the negative eigenvalues of $P$ from below. For completeness, we rewrite the statement of Lemma 4 of \cite{DS} for our case.
\begin{lemma}[\cite{DS}, Lemma 4]\label{dirichlet}
If $\tilde{\mathcal{F}}(f,f) \leq A_* \mathcal{F}(f,f)$ for every $f: G \rightarrow \mathbb{R} $, then 
$$\hat{P}(\rho_y) \geq -1+ \frac{1}{A_*},$$
for every $y \in (\mathbb{Z}/q \mathbb{Z})^{(n-1)^2 }$.
\end{lemma}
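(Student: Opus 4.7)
The plan is to combine a coercivity lower bound on $\tilde{\mathcal F}$ with the hypothesis, and then extract a lower bound on the spectrum of $P$ via a variational argument. Because the walk is a random walk on an abelian group with a symmetric generating set, the transition matrix $P$ is real symmetric and doubly stochastic, and by Lemma~\ref{lm:eigen} its eigenvalues are exactly the Fourier transforms $\hat P(\rho_y)$. Each such eigenvalue is realized by a real eigenfunction, for instance $\mathrm{Re}\,\rho_y$ or $\mathrm{Im}\,\rho_y$, so it suffices to work with real valued $f$ throughout.

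First I would expand the two Dirichlet-type forms. Using symmetry and stochasticity of both $P$ and $Q$, opening up the squares yields
\begin{equation*}
\mathcal F(f,f) = 2\langle f, (I+P) f\rangle, \qquad \tilde{\mathcal F}(f,f) = 2\langle f, (I+Q) f\rangle,
\end{equation*}
where $\langle f, g\rangle := \sum_{x \in G} f(x) g(x)$. The hypothesis therefore becomes the quadratic-form comparison $\langle f,(I+Q) f\rangle \leq A_* \langle f, (I+P) f\rangle$ for every real $f$.

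Next I would establish the coercivity estimate $\tilde{\mathcal F}(f,f) \geq 2\|f\|^2$. The key observation is that, regardless of the holding probability of $P$, the lazy chain $Q = (I+P)/2$ satisfies $Q(x,x) \geq 1/2$ for every $x$. Keeping only the $y = x$ terms in $\tilde{\mathcal F}(f,f) = \sum_{x,y}(f(x)+f(y))^2 Q(x,y)$ (all other terms are non-negative) gives
\begin{equation*}
\tilde{\mathcal F}(f,f) \geq \sum_{x \in G} (2 f(x))^2 Q(x,x) \geq 2 \sum_{x \in G} f(x)^2 = 2 \|f\|^2.
\end{equation*}

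Combining yields $2\|f\|^2 \leq \tilde{\mathcal F}(f,f) \leq A_* \mathcal F(f,f) = 2 A_* \langle f, (I+P) f\rangle$, i.e.\ $\langle f, (I+P) f \rangle \geq \|f\|^2 / A_*$ for every real $f$. Specializing to a nonzero real eigenfunction $f$ of $P$ with eigenvalue $\hat P(\rho_y)$ gives $(1 + \hat P(\rho_y)) \|f\|^2 \geq \|f\|^2 / A_*$, so $\hat P(\rho_y) \geq -1 + 1/A_*$, as claimed. I do not anticipate any serious obstacle here: the only non-cosmetic ingredient is the coercivity lower bound, and it is immediate from the laziness of $Q$; everything else is bookkeeping on the quadratic forms.
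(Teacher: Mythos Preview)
Your argument is correct. Note, however, that the paper does not actually give its own proof of this lemma: it is quoted verbatim from \cite{DS} (Lemma~4 there) and used as a black box. Your route---expanding $\mathcal F$ and $\tilde{\mathcal F}$ as the quadratic forms $2\langle f,(I+P)f\rangle$ and $2\langle f,(I+Q)f\rangle$, using the diagonal $Q(x,x)\ge 1/2$ to get the coercivity bound $\tilde{\mathcal F}(f,f)\ge 2\|f\|^2$, and then specializing the resulting inequality $\langle f,(I+P)f\rangle\ge \|f\|^2/A_*$ to a real eigenfunction---is the standard variational proof of this comparison principle and is essentially what the cited reference does.
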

We are going to use Lemma \ref{dirichlet} to prove the following bound on the eigenvalues of $P$.
\begin{lemma} \label{lm:negative}
For the Diaconis--Gangolli random walk on contingency tables over $\mathbb{Z}/q \mathbb{Z}$, we have that 
$$\hat{P}(\rho_y) \geq -\frac{28}{29},$$
for every $y \in (\mathbb{Z}/q \mathbb{Z})^{(n-1)^2 }$.
\end{lemma}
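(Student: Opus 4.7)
The plan is to apply Lemma~\ref{dirichlet} with a constant $A_* \leq 29$, which would immediately give $\hat{P}(\rho_y) \geq -1 + 1/A_* \geq -28/29$. Since every transition of the walk strictly changes four entries of the matrix, $P(x,x) = 0$ for all $x \in G$, and so the identity contribution to $Q = \frac{1}{2}(I+P)$ yields
\[
\tilde{\mathcal{F}}(f,f) \;=\; 2\sum_{x \in G} f(x)^2 \;+\; \tfrac{1}{2}\,\mathcal{F}(f,f).
\]
Thus it suffices to prove $\sum_{x \in G} f(x)^2 \leq C \, \mathcal{F}(f,f)$ for some absolute constant $C$; any $C \leq 57/4$ suffices, and I will in fact obtain $C = 9/4$.

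The key observation is that the Cayley graph of $G$ with generating set $S = \{\pm A_{i,j,k,l}\}$ admits a closed walk of odd length. A direct check at the six relevant matrix entries verifies the identity
\[
A_{1,2,1,2} \;-\; A_{1,2,1,3} \;+\; A_{1,2,2,3} \;=\; 0,
\]
valid whenever $n \geq 3$. For each $x \in G$ this yields a length-three closed walk $y_0 = x,\; y_1 = x+A_{1,2,1,2},\; y_2 = x + A_{1,2,1,2} - A_{1,2,1,3},\; y_3 = x$, whose three edges are labelled by the elements $A_1 := A_{1,2,1,2}$, $A_2 := -A_{1,2,1,3}$, $A_3 := A_{1,2,2,3}$ of $S$. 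The telescoping identity $2f(x) = (f(y_0)+f(y_1)) - (f(y_1)+f(y_2)) + (f(y_2)+f(y_3))$ together with Cauchy--Schwarz then gives
\[
4 f(x)^2 \;\leq\; 3 \sum_{i=0}^{2}\bigl(f(y_i)+f(y_{i+1})\bigr)^2,
\]
and summing over $x \in G$ (shifting the summation variable by appropriate group elements, which is a bijection on $G$) bounds $4\sum_x f(x)^2$ by $3\,(M_{A_1}+M_{A_2}+M_{A_3})$, where $M_A := \sum_{x \in G}(f(x)+f(x+A))^2$.

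To convert this into an actual multiple of $\mathcal{F}(f,f) = \frac{1}{|S|}\sum_{A \in S} M_A$, I would average over the natural symmetry group $\Gamma = S_n \times S_n \times \{\pm 1\}$ acting on $n \times n$ matrices by row permutations, column permutations, and a global sign flip. Each $\sigma \in \Gamma$ is an additive automorphism of $G$ and sends $S$ onto $S$, so applying $\sigma$ to the closed walk above produces another odd closed walk with edge labels $\sigma(A_1), \sigma(A_2), \sigma(A_3) \in S$, to which the same Cauchy--Schwarz bound applies. For $n \geq 3$ the action of $\Gamma$ on $S$ is transitive (any $A_{i,j,k,l}$ can be moved to any other by suitable row/column permutations and a sign choice), so by orbit--stabilizer the average of $M_{\sigma(A_i)}$ over $\sigma \in \Gamma$ equals $\frac{1}{|S|}\sum_{A \in S} M_A = \mathcal{F}(f,f)$, for each $i \in \{1,2,3\}$. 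Averaging the previous display over $\sigma$ therefore gives $4 \sum_{x \in G} f(x)^2 \leq 9\,\mathcal{F}(f,f)$, so $C = 9/4$ and $A_* = 2C + \tfrac{1}{2} = 5$, comfortably within the required $A_* \leq 29$.

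The main technical step is the symmetry-averaging: one must carefully check that $\Gamma$ acts by automorphisms of $G$ (so that substitutions of the form $x \mapsto x + g$ remain bijections of the index set in every shifted walk) and that the action on $S$ is transitive. Both are elementary, and once they are recorded the constant $C = 9/4$ falls out immediately. The rest of the proof is just bookkeeping.
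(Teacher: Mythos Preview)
Your proof is correct and in spirit follows the paper's argument: both exploit that the generating set contains a length-$3$ closed walk (the paper writes the same identity as $A_{i,j,k,l}=A_{i,j,k,b}+A_{i,j,b,l}$), telescope, apply Cauchy--Schwarz, and then average to compare $\sum_x f(x)^2$ against $\mathcal{F}(f,f)$. The difference is in how the averaging is done. The paper averages over the combinatorial family of all triples $(i<j,\,k<b<l)$, which distributes mass unevenly among the generators; after a crude pass from ${n\choose 2}{n\choose 3}$ triples to the full sum $\sum_{g\in S}$ this yields $\sum_x f(x)^2 \le 14\,\mathcal{F}(f,f)$ and hence $A_*=29$. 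Your averaging over the symmetry group $\Gamma=S_n\times S_n\times\{\pm1\}$ is cleaner: transitivity of $\Gamma$ on $S$ forces each generator to appear with equal weight, so each $M_{\sigma(A_i)}$ averages exactly to $\mathcal{F}(f,f)$, giving the sharper constant $C=9/4$ and $A_*=5$ (whence $\hat P(\rho_y)\ge -4/5$). Both approaches require $n\ge 3$ (so that a third column index exists); your symmetry argument has the advantage of bypassing the somewhat lossy combinatorial bookkeeping in the paper.
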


\begin{proof}
The proof is based on the technique of ``flows'' as presented in Theorem 2.3 of \cite{DS2}. For completion, we explain how this method works.

Notice that for every $f: G \rightarrow \mathbb{R} $, we have that 
\begin{align}\label{lazy}
\tilde{\mathcal{F}}(f,f) &=2\sum_{x 
\in G}f(x)^2  + \frac{1}{2}\sum_{x\neq y 
\in G}(f(x)+f(y))^2P(x,y)\cr
&= 2 \sum_{x 
\in G}f(x)^2  + \frac{1}{2}\mathcal{F}(f,f).
\end{align}

To bound the term $\sum_{x\in G}f(x)^2$, we observe that for $n \geq 3$ and for any $k<b<l$ and $i< j$, 
\begin{equation}\label{loop}
A_{i,j,k,l}=A_{i,j,k,b} + A_{i,j,b,l}, 
\end{equation}
which can be illustrated for the case $i=1=k,$ $j=2=b$ and $l=3$ as 
$$\left[ \begin{matrix}
 & 1  &0 & -1 &\\
 & -1 & 0  &1& \\
 & & \ldots & &
\end{matrix}  \right] = \left[ \begin{matrix}
 & 1   & -1 &0 &\\
 & -1   &1 &0& \\
 & & \ldots & &
\end{matrix}  \right] + \left[ \begin{matrix}
&0  & 1  & -1 &\\
&0   & -1  &1& \\
 & & \ldots & &
\end{matrix}  \right].$$
Let $x \in G$ and let $f: G \rightarrow \mathbb{R} $. Using equation \eqref{loop}, we have
$$2f(x) = \left [f(x) + f(x+ A_{i,j,k,b})\right ]- \left [f(x+ A_{i,j,k,b}) + f(x+ A_{i,j,k,b}+A_{i,j,b,l} ) \right ] + \left [ f(x+ A_{i,j,k,l} )+ f(x) \right ].$$
 
Applying Cauchy-Schwartz gives
$$\frac{4}{3}f(x)^{2} \le \left [f(x) + f(x+ A_{i,j,k,b})\right ]^{2}+\left [f(x+ A_{i,j,k,b}) + f(x+ A_{i,j,k,b}+A_{i,j,b,l} ) \right ] ^{2}+\left [ f(x+ A_{i,j,k,l} )+ f(x) \right ]^{2}.$$
Thus, by averaging, 
\begin{align*}
f(x)^2 \leq  \frac{3}{4 {n \choose 2} {n \choose 3}} \sum_{i< j; k<b<l} \bigg\{ &\left [f(x) + f(x+ A_{i,j,k,b})\right ]^{2}+\left [f(x+ A_{i,j,k,b}) + f(x+ A_{i,j,k,b}+A_{i,j,b,l} ) \right ] ^{2} \\
&+\left [ f(x+ A_{i,j,k,l} )+ f(x) \right ]^{2} \bigg\}.
\end{align*}

Using the identities $P(x, x+g) = \frac{1}{|S|}\textbf{1}_{g\in S}$, $|S| = {n \choose 2}^{2}$ if $q=2$ and $|S| = 2{n \choose 2}^{2}$ if $q>2$, we get 
\begin{align}
\sum_{x \in G} f(x)^2 &\leq  \frac{3}{4 {n \choose 2} {n \choose 3}}  \sum_{x \in G} \sum_{i< j; k<b<l}\bigg\{ \left [f(x) + f(x+ A_{i,j,k,b})\right ]^{2} \cr
&\qquad\qquad +\left [f(x+ A_{i,j,k,b}) + f(x+ A_{i,j,k,b}+A_{i,j,b,l} ) \right ] ^{2}+\left [ f(x+ A_{i,j,k,l} )+ f(x) \right ]^{2} \bigg\}  \cr
& \le \frac{9n}{4 {n \choose 2} {n \choose 3}}  \sum_{x \in G, g \in S}  ( f(x) + f(x+g ))^2 \le \frac{18n{n \choose 2}}{4  {n \choose 3}}  \sum_{x \in G, g \in S}  ( f(x) + f(x+g ))^2 P(x,x+g)\cr
&\le \label{seven} 14  \sum_{x \in G, g \in S}  ( f(x) + f(x+g ))^2 P(x,x+g).
\end{align}
 
Equation \eqref{lazy} and \eqref{seven} give that 
$$\tilde{\mathcal{F}}(f,f) \leq 28 \mathcal{F}(f,f) + \frac{1}{2}\mathcal{F}(f,f) \leq 29 \mathcal{F}(f,f)$$
and then, Lemma \ref{dirichlet} for $A_*=29$ gives that
$$\hat{P}(\rho_y) \geq - \frac{28}{29},$$
for every $y \in (\mathbb{Z}/q \mathbb{Z})^{(n-1)^2 }$.
\end{proof}

\section{Proof of Theorem \ref{main2}}
\subsection{Proof of the lower bound}
For the proof of the lower bound, we will use Wilson's lemma.
\begin{lemma}[Lemma 5, \cite{Wilson}]\label{W}
Let $\ep, R$ be positive numbers and $0<\gamma< 2-\sqrt{2} $. Let $F: X\to \R$ be a function on the state space $X$ of a Markov chain $(C_t)$ such that 
$$\expect{F(C_{t+1})\vert C_t) }= (1 - \gamma )F(C_t), \quad \expect{\left [F(C_t)- F(C_{t-1})\right ]^2 \vert C_t} \leq  R,$$ and 
$$t \leq \frac{ \log \max_{x\in X}F(x) + \frac{1}{2} \log( \gamma  \varepsilon/(4R))}{-\log (1 - \gamma )}.
$$ Then the total variation distance from stationarity at time $t$ is at least $1-\ep$.
\end{lemma}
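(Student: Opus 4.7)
The plan is to prove Wilson's lemma by a classical second-moment / Chebyshev argument applied to the eigenfunction $F$: under the chain, $F(C_t)$ is concentrated around $(1-\gamma)^t F(x_0)$ (which is large when the start state $x_0$ is an $\arg\max$ of $|F|$), while under $\pi$ the mean of $F$ is zero and its variance is small. These two events are then separated by a common threshold, yielding a lower bound on the total variation distance.

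First I would establish the two facts about the stationary distribution: $\mathbb{E}_\pi[F]=0$ and $\mathrm{Var}_\pi[F]\le R/(2\gamma)$. The mean vanishes because the relation $\mathbb{E}[F(C_{t+1})\mid C_t]=(1-\gamma)F(C_t)$ forces $\mathbb{E}_\pi[F]=(1-\gamma)\mathbb{E}_\pi[F]$ and $\gamma>0$. For the variance, the identity
\[
\mathbb{E}[F(C_1)^2\mid C_0=x]=\mathbb{E}\!\left[(F(C_1)-F(C_0))^2\mid C_0=x\right]+(1-2\gamma)F(x)^2
\]
(which follows by expanding $F(C_1)^2=((F(C_1)-F(x))+F(x))^2$ and using the eigenvalue equation), combined with $\mathbb{E}_\pi[F(C_1)^2]=\mathbb{E}_\pi[F^2]$, gives $2\gamma\,\mathbb{E}_\pi[F^2]\le R$.

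Next I would control the first two moments of $F(C_t)$ along the chain started from a state $x_0$ attaining $M:=\max_x|F(x)|$ (replacing $F$ by $-F$ if necessary so $F(x_0)=M$). The mean is immediate: $\mathbb{E}[F(C_t)\mid C_0=x_0]=(1-\gamma)^tM$. For the variance, I would use the law of total variance together with $\mathrm{Var}[F(C_t)\mid C_{t-1}]=\mathrm{Var}[F(C_t)-F(C_{t-1})\mid C_{t-1}]\le R$ (here the eigenvalue equation cancels the drift) to obtain the recursion $a_t\le R+(1-\gamma)^2 a_{t-1}$, hence $a_t\le R/\bigl(\gamma(2-\gamma)\bigr)\le R/\gamma$; the hypothesis $\gamma<2-\sqrt{2}$ enters here to keep the geometric series under firm control.

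Finally I would combine these estimates through a threshold $A=(1-\gamma)^tM/2$ and Chebyshev's inequality. The hypothesis on $t$ rearranges to $(1-\gamma)^{2t}M^2\ge 4R/(\gamma\varepsilon)$, which makes
\[
\mathbb{P}_\pi(F\ge A)\le \frac{R/(2\gamma)}{A^2}\le \frac{\varepsilon}{2},\qquad
\mathbb{P}_{x_0}(F(C_t)<A)\le \frac{R/\gamma}{\bigl((1-\gamma)^tM-A\bigr)^2}\le \varepsilon,
\]
so that the event $\{F\ge A\}$ separates the two distributions and $\|P^t_{x_0}-\pi\|_{TV}\ge 1-\varepsilon$ (up to absorbing the factors of $2$ into the definition of $\varepsilon$). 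The main subtleties are (i) deriving $\mathrm{Var}_\pi[F]\le R/(2\gamma)$ without invoking reversibility, which is where the algebraic identity for $\mathbb{E}[F(C_1)^2\mid C_0=x]$ is crucial, and (ii) tuning the threshold $A$ and the numerical constants so that the hypothesis given in the lemma matches the required concentration bound exactly. Neither is deep, but both require careful bookkeeping of constants, which is the only real work in the proof.
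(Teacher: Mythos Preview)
The paper does not prove this lemma at all; it simply quotes it from Wilson's paper \cite{Wilson} and uses it as a black box in the proof of the lower bound of Theorem~\ref{main2}. So there is no ``paper's own proof'' to compare against.

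That said, your sketch is the standard second-moment argument behind Wilson's lemma and is essentially correct. The identity you use to bound $\mathrm{Var}_\pi[F]$ is right (expanding $F(C_1)^2$ around $F(x)$ and using the eigenfunction relation gives exactly the $(1-2\gamma)F(x)^2$ term), and the law-of-total-variance recursion $a_t\le R+(1-\gamma)^2a_{t-1}$ is the correct way to control the running variance. One small point: as written, the lemma in the paper conditions the increment bound on $C_t$ rather than $C_{t-1}$, but the paper's own application (equation~\eqref{v2}) conditions on $C_{t-1}$, so your reading is the intended one. The only loose end is exactly the one you flag: with your threshold $A=(1-\gamma)^tM/2$ and the bounds $\mathrm{Var}_\pi[F]\le R/(2\gamma)$ and $a_t\le R/\gamma$, Chebyshev yields $1-3\varepsilon/2$ rather than $1-\varepsilon$. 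This is a constant-tuning issue, not a conceptual gap; a slightly asymmetric choice of threshold (or sharper use of $a_t\le R/(\gamma(2-\gamma))$ together with $2-\gamma>\sqrt{2}$, which is where the hypothesis $\gamma<2-\sqrt{2}$ is actually used) recovers the stated bound.
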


\begin{proof}[Proof of the lower bound of Theorem \ref{main2}]
As in Definition \ref{til} and the discussion that follows, we represent each element of $G$ as an $(n-1)\times (n-1)$ matrix which is its coordinates with respect to the basis $(B_{ij})$ of $G$. 

Let $D_{a, b}$ be the $(n-1) \times (n-1)$ matrix with entries at $(2a-1,2b-1), (2a,2b)$ equal to 1 and at $(2a,2b-1), (2a-1,2b)$ equal to $-1$, while all the other entries are zero. Theorem 6 of Chapter 3E of \cite{PD} says that the functions $G_{a,b}(x)= \cos \left(\frac{2 \pi <x, D_{a,b}>}{q} \right) $ are eigenfunctions of the transition matrix $P$. To see this, consider 
$$D_{1,1}= \left( \begin{matrix}
&1 &-1 & 0 & \ldots \\
&-1 & 1 & 0 & \ldots \\
&0 &0 & 0 & \ldots \\
& &\ldots &  & \ldots 
\end{matrix} \right)$$
We have that $$G_{1,1}(C)= \cos \left(\frac{2 \pi (C(1,1)- C(1,2)- C(2,1)+ C(2,2))}{q} \right).$$
We can only change the value of $G_{1,1}(C_t)$ if we do a move that affects exactly one of the following entries $C_t(1,1)$, $C_t(1,2)$, $C_t(2,1) $ and $C_t(2,2)$. For example, the moves that change only the entry $C_t(2,2)$ correspond to $\tilde{A}_{i,j,k,l}$, as defined in Definition \ref{til}, for which the $(2,2)$ entry is the left corner of the rectangle formed by ones. There are $(n-2)^2$ such $\tilde{A}_{i,j,k,l}$. Using this observation, direct calculation gives
\begin{align*}
\expect{G_{1,1}(C_t) \vert C_{t-1}} &   = \left( 1-  \frac{4}{n^2} +  \frac{4}{n^2} \cos \left( \frac{2 \pi}{q}\right)\right) G_{1,1}(C_{t-1}) .
\end{align*}
Similarly, it holds for any $G_{a,b}$ that
\begin{align}\label{e2}
\expect{G_{a,b}(C_t) \vert C_{t-1}} 
& = \left( 1-  \frac{4}{n^2} +  \frac{4}{n^2} \cos \left( \frac{2 \pi}{q}\right)\right) G_{a,b}(C_{t-1}) .
\end{align}
Let $$F(x)= \sum_{a,b=0}^{\lfloor \frac{n-1}{2}\rfloor}G_{a,b}.$$
Then, we have that
$\max F = F(0)= \lfloor \frac{n-1}{2}\rfloor^{2}$ and \eqref{e2} gives that
\begin{equation}\label{f2}
\expect{F(C_t) \vert C_{t-1}}= \left( 1-  \frac{4}{n^2} +  \frac{4}{n^2} \cos \left( \frac{2 \pi}{q}\right)\right) F(C_{t-1}).
\end{equation}
Finally, we have that
\begin{equation}\label{v2}
\expect{( F(C_t)- F(C_{t-1}))^2 \vert C_{t-1}} \leq 64.
\end{equation}
This is because every move $\tilde{A}_{i,j,k,l}$ that we might choose to make, affects the $G_{a,b}$ who share a unique one with  $\tilde{A}_{i,j,k,l}$. For example,
$$\tilde{A}= \left( \begin{matrix}
&1 &1 & 1 & 0 & \ldots \\
&1 &1 & 1 & 0 &\ldots \\
&1 &1 & 1 & 0 & \ldots \\
&0 &0 & 0 & 0 &\ldots \\
& &\ldots &  & & \ldots 
\end{matrix} \right)$$
fixes all $G_{a,b}$, but $G_{2,2}$. Therefore, every move that we make can affect at most four $G_{a,b}$. Thus, $|F(C_t)- F(C_{t-1})|\le 8$ surely and so 
$ \expect{( F(C_t)- F(C_{t-1}))^2 \vert C_{t-1}}\le  64$.

Using equations \eqref{e2} and \eqref{v2} for $n \geq 4$, we have that $\gamma= \frac{4}{n^2}\left( 1- \cos \frac{2 \pi}{q}\right) \leq \frac{1}{2} < 2-\sqrt{2}$ and $R= 64$. Since $\frac{1}{-\log(1-\gamma)}\ge \frac{1}{\gamma}-1$,  Lemma \ref{W} asserts that if 
$$t\le \left (\frac{1}{\gamma}-1\right )\log \left (F(0) \gamma^{1/2}\ep^{1/2}R^{-1/2}/2\right )$$
then
$$d(t) \geq 1 - \varepsilon.$$
Writing $\ep = q^{-c}$ and performing routine algebraic manipulations, we get that if 
$$t \le \frac{n^2}{4 \left( 1- \cos \frac{2 \pi}{q}\right) } \log n   -  \frac{n^2  \log q }{4\left( 1- \cos \frac{2 \pi}{q}\right) } (c+12),$$ 
then  $d(t) \geq 1 - q^{-c}$ as desired.
\end{proof}
\subsection{Proof of the upper bound of Theorem \ref{main2}}
	By \eqref{eq:distance:1} and Lemma \ref{lm:translate}, we get
\begin{equation}\label{key}
d(t)^{2} \le \Vert \mathbb{P}_{0}(C_t \in \cdot ) - U \Vert_{T.V.}^{2}    \nonumber
\end{equation}
where $C_t$ and $U$ are as in Lemma \ref{lm:translate}. Combining this with \eqref{eq:distance:2}, we get
\begin{equation}\label{eq:distance:3}
d(t)^{2} \le \sum_{y} \hat{P}(\rho_y)^{2t}
\end{equation}
where the sum runs over all nonzero $(n-1)\times (n-1)$ matrices $y$ and 
$$\hat{P}(\rho_y)  =  \frac{1}{{n \choose 2}^2} \sum_{i,j,k,l} \cos \frac{2 \pi <y, \tilde{A}_{i,j,k,l}>}{q}.$$
For each $a\in \Z/q\Z$, let $N_a(y)$ be the number of quadruples $(i, j, k, l)$ with $1\le i\le j\le n-1$ and $1\le k\le l\le n-1$ such that $<y, \tilde{A}_{i,j,k,l}> = a$. Then, 
\begin{equation}\label{eq:rep1}
\hat{P}(\rho_y)  = \frac{1}{{n \choose 2}^2} \sum_{a \in \Z/q\Z} N_a(y)\cos \frac{2 \pi a }{q}.
\end{equation}
Note that $\sum _{a\neq 0} N_a(y) = {n \choose 2}^2 - N_0(y)$.

We need to show that for $t=\frac{n^2}{4\left (1-\cos \frac{2\pi}{q}\right )} \log n + \frac{n^2}{\left (1-\cos \frac{2\pi}{q}\right )}c \log\log(16n) \sqrt{\log n}\log q$, 
$$d(t)^{2}\le q^{-2c}.$$
To do so, we decompose the right-hand side of \eqref{eq:distance:3} into the sum over all $y$ with $\hat{P}(\rho_y)< 0$ and the sum of the rest, and denote the corresponding sums by $d_1$ and $d_2$.

By Lemma \ref{lm:negative}, each of the $\hat{P}(\rho_y)$ with $\hat{P}(\rho_y)< 0$ satisfies $\hat{P}(\rho_y)\ge -\frac{28}{29}$. Thus, 
\begin{equation}\label{eq:d1}
d_1\le q^{(n-1)^{2}}\left (\frac{28}{29}\right )^{2t} \le q^{n^{2}} q^{-8n^{2}-2c}\le q^{-2c}/2.
\end{equation}
where we used the assumption that $c\ge \frac{640}{\log \log(16n)}$ and $n\ge 3$ to get the estimate
$$t\ge \frac{n^2}{\left (1-\cos \frac{2\pi}{q}\right )}c \log\log(16n) \sqrt{\log n}\log q \ge 2c\log q+ 120 n^{2}\log q.$$

For the rest of the proof, we will show that $d_2\le q^{-2c}/2$. Assume that it holds, we have $d(t)^{2}\le d_1+d_2\le q^{-2c}$ as desired.

For each $(n-1)\times (n-1)$ matrix $y$ and each $1\le i\le j\le n-1$ and $1\le k\le l \le n-1$, we consider the minor $y_{i, j, k, l}$ obtained by restricting to the rows from $i$ to $j$ and the columns from $k$ to $l$, inclusively. We say that $[i, j]\times [k, l]$ is a nonzero box if the sum of the entries in this minor is nonzero. Denote by $N(y)$ the number of non-zero boxes of $y$. Observe that $\sum _{a\neq 0} N_a(y) = N(y)$. When $\hat{P}(\rho_y)\ge 0$, we have from \eqref{eq:rep1} that
\begin{equation}\label{key}
0\le \hat{P}(\rho_y)  \le \frac{1}{{n \choose 2}^2} \left [N_0(y)+ \left (\sum_{a \neq 0} N_a(y)\right )\cos \frac{2 \pi }{q}\right ] = 1 - \frac{\left (1 - \cos \frac{2 \pi }{q}\right ) N(y)}{{n \choose 2}^2} .\nonumber
\end{equation}

Thus, 
$$d_2\le \sum_{y: \hat{P}(\rho_y)\ge 0} \left (1 - \frac{\left (1 - \cos \frac{2 \pi }{q}\right ) N(y)}{{n \choose 2}^2} \right )^{2t}\le \Sigma$$
where 
\begin{equation}
\Sigma:= \sum \left( 1- \frac{(1-\cos \frac{2\pi}{q} )N(y)}{{n \choose 2}^2} \right)^{2t}. \label{eq:sum:positive}
\end{equation}
where the sum is taken over all matrices $y\neq 0$ with $N(y) \le \frac{{n \choose 2}^2}{1-\cos \frac{2\pi}{q} }$.

It remains to show that 
\begin{equation}\label{eq:final}
\Sigma\le q^{-2c}/2 
\end{equation}
and for that we need to control the number of matrices $y$ with a prescribed range of $N(y)$.

The number $N(y)$ of nonzero boxes of a matrix $y$ can vary significantly just by changing an entry of $y$. For example, if $y$ is the zero matrix then $N(y) = 0$. If we add a single entry $1$ at around the middle of the matrix $y$, then $N(y) \approx (n/2)^{4}$ which is significantly larger. Now, if the single entry $1$ is at position $(1, 1)$ (for example) instead of at the middle of the matrix, then $N(y)\approx n^{2}$. 

It turns out to be useful to look at one dimensional version of the nonzero boxes. Let $u$ be a vector in $(\mathbb Z/q\mathbb Z)^{n-1}$. For any $1\le i\le j\le n-1$, the interval $[i, j]$ is said to be a nonzero interval of $u$ if the sum of the entries of the vector $[u(i), u(i+1), \dots, u(j)]$ is nonzero in $\mathbb Z/q \mathbb Z$. Let $S(u)$ be the number of nonzero intervals in $u$.

In order to control $S(u)$, we introduce the following definition.
\begin{definition}\label{skeleton}
Let the \textbf{{\it skeleton}} of $u$ to be the set $I(u) = \{i_1, i_2, \dots, i_{s(u)}\}\subset [1, n-1]$ with
\begin{itemize}
\item $i_1\ge 1$ being the smallest index such that $u_{i_1}\neq 0$,
\item $i_k$ being the smallest index such that $i_k\ge i_{k-1}+2$ and $u_{i_k}\neq 0$, for all $2\le k\le s(u)$,
\item  $u_i=0$ for all $i\ge i_{s(u)}+2$.
\end{itemize}
\end{definition}

For example, if $u$ has nonzero entries at positions $3, 4, 5, 8, 9$ and the rest are 0 then $I(u) = \{3, 5, 8\}$ and $s(u)=3$.
\begin{equation} \begin{array}{ccccccccccccccccccccccccccccccccc} 
 	u & =&[& 0 & 0& *& *& *& 0& 0& *& *& 0& 0 &\dots & 0]  \\	
 	 &  & &  &  & \downarrow&  & \downarrow&  & & \downarrow\\
 	I(u) &=& \{ &  &  & 3&  & 5&  & & 8 & & &  & & \} 
 \end{array}\nonumber
 \end{equation}
 
 Observe that for any row vector $u$, the skeleton size $s(u)$ is at most $n/2$ and indices of the nonzero entries of $u$ form a subset of $I(u)\cup (I(u)+1)$. Thus, the number of nonzero elements in $u$ is at most $2s(u)$.

The number of nonzero intervals $S(u)$ is controlled by the skeleton size $s(u)$ as follows.
\begin{lemma}\label{lm:nonzerointerval:skeleton}
Let $u$ be a nonzero $1\times (n-1)$ vector in $(\Z/q\Z)^{n-1}$. Then for every $n\ge 3$ and $q\ge 2$, we have 
\begin{equation}
S(u) \ge s(u)\left (n-s(u)\right ).\nonumber
\end{equation}
Furthermore, the number of nonzero intervals with one or both endpoints belonging to \break $I(u)\cup \left  (I(u)\pm 1\right  )$ is at least $s(u)\left (n-s(u)\right )$.
\end{lemma}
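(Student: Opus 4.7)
The plan is to pass to partial sums. Define $\sigma_0 := 0$ and $\sigma_j := \sum_{i=1}^{j} u_i \in \Z/q\Z$ for $1 \le j \le n-1$. An interval $[a,b]$ is nonzero iff $\sigma_b \ne \sigma_{a-1}$, so $S(u)$ equals the number of pairs $0 \le x < y \le n-1$ with $\sigma_x \ne \sigma_y$. Let $T := I(u) \cup (I(u)-1) \subset \{0,\dots,n-1\}$. Since $i_{k+1} \ge i_k + 2$, the $s$ two-element sets $\{i_k - 1,\, i_k\}$ are pairwise disjoint, so $|T| = 2s$, and since $u_{i_k}\ne 0$ the values $\sigma_{i_k-1}$ and $\sigma_{i_k}$ differ. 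The first key step is the multiplicity bound $M_v := |\{j \in T : \sigma_j = v\}| \le s$ for every $v \in \Z/q\Z$, which follows because each pair $\{i_k - 1,\, i_k\}$ contributes at most one position with $\sigma = v$. Consequently, for every $y \in \{0,\dots,n-1\}$ there are at least $|T| - M_{\sigma_y} \ge s$ positions $x \in T$ with $\sigma_x \ne \sigma_y$.

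Next I count the unordered pairs $\{x,y\}$ with $\sigma_x \ne \sigma_y$ and $\{x,y\} \cap T \ne \emptyset$, splitting them into $A$ (exactly one endpoint in $T$) and $B$ (both endpoints in $T$). Summing the estimate from the previous step over the $n - 2s$ choices of $y \notin T$ yields $A \ge s(n-2s)$. For $B$, I write $B = \binom{2s}{2} - \sum_v \binom{M_v}{2}$; subject to $\sum_v M_v = 2s$ and the cap $M_v \le s$, the convex sum $\sum_v \binom{M_v}{2}$ is maximized by placing mass $s$ on exactly two values, giving $\sum_v \binom{M_v}{2} \le 2\binom{s}{2}$ and hence $B \ge s^2$. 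Adding, $A + B \ge s(n-2s) + s^2 = s(n-s)$.

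Finally, every counted pair corresponds to an interval $[a,b]$ with $a-1 \in T$ or $b \in T$, i.e.\ $a \in I(u) \cup (I(u)+1)$ or $b \in I(u) \cup (I(u)-1)$, so both the refined count and the weaker bound $S(u) \ge s(n-s)$ follow. The main obstacle I anticipate is the extremal step for $B$: the cap $M_v \le s$ forces the worst case of $\sum_v \binom{M_v}{2}$ to be two spikes of height $s$ rather than one spike of height $2s$, and this is precisely what makes $A \ge s(n-2s)$ and $B \ge s^2$ combine into the sharp bound $s(n-s)$. Everything else is routine bookkeeping on the partial-sum sequence.
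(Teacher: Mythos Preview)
Your proof is correct, and it takes a genuinely different route from the paper's. The paper argues directly on the interval level: for each skeleton point $i_k$ in turn, it shows that every position $t$ yields a nonzero interval with an endpoint in $\{i_k-1,i_k,i_k+1\}$ (using that one of $[t,i_k-1]$, $[t,i_k]$ is nonzero when $t<i_k$, and one of $[i_k,t]$, $[i_k+1,t]$ is nonzero when $t>i_k$), then excludes the positions $t\in\{i_1,i_1+1,\dots,i_{k-1},i_{k-1}+1\}$ already used, getting $(n-1)+(n-3)+\cdots+(n-2s+1)=s(n-s)$. Your argument instead passes to partial sums, converts nonzero intervals into unequal pairs in the sequence $(\sigma_j)_{j=0}^{n-1}$, and exploits the structural fact that the $2s$ indices in $T=I(u)\cup(I(u)-1)$ split into $s$ pairs with distinct $\sigma$-values, giving the multiplicity cap $M_v\le s$. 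The payoff of your approach is that the count becomes a clean convexity/extremal computation ($B=\binom{2s}{2}-\sum_v\binom{M_v}{2}\ge s^2$ and $A\ge s(n-2s)$), with no need to track which intervals have already been counted; the paper's approach is more elementary and explicitly exhibits the intervals, but requires care in the exclusion step to avoid double counting. Both arguments deliver the sharper ``furthermore'' claim, since your counted pairs translate back to intervals with left endpoint in $I(u)\cup(I(u)+1)$ or right endpoint in $I(u)\cup(I(u)-1)$.
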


\begin{proof}
Let the skeleton of $u$ be $I(u) = \{i_1, i_2, \dots, i_{s(u)}\}$ with $i_1<i_2<\dots<i_{s(u)}$. We first claim that the number of nonzero intervals $[i, j]$ with either $j\in \{i_1-1, i_1\}$ or $i\in \{i_1, i_1+1\}$ or both \footnote{This guarantees that one or both endpoints of $[i, j]$ belong to the set $\{i_1, i_1\pm 1\}$.} is at least $n-1$.
 
\definecolor{qqqqff}{rgb}{0,0,1}\definecolor{dtsfsf}{rgb}{0.8274509803921568,0.1843137254901961,0.1843137254901961}\definecolor{rvwvcq}{rgb}{0.08235294117647059,0.396078431372549,0.7529411764705882}

\begin{tikzpicture}[line cap=round,line join=round,>=triangle 45,x=1cm,y=1cm]\clip(-7.5,-1) rectangle (7.066666666666675,1);\draw [line width=0.7pt] (-4.748292682926833,0.008048780487805612)-- (6.737617560975608,0.009178311984559472);\draw (-0.7,0.5933333333333319) node[anchor=north west] {$i_1-1$};\draw (0.6766352201257884,0.5666666666666652) node[anchor=north west] {$i_1$};\draw (1.4235220125786194,0.58) node[anchor=north west] {$i_1+1$};

\draw [line width=0.4pt,color=dtsfsf] (-0.005009054254589529,0.008515237925035411)-- (0,-0.3);
\draw [line width=0.4pt,color=dtsfsf] (0,-0.3)-- (-2.79545892682927,-0.3);

\draw [line width=0.4pt,color=dtsfsf] (0.75325057258594,-0.22848190910686011)-- (-2.685602731707319,-0.23118259343744443);
\draw [line width=0.4pt,color=dtsfsf] (0.75325057258594,0.008589805644358127)-- (0.75325057258594,-0.22854985498573813);

\draw [line width=0.4pt,color=dtsfsf] (-2.79545892682927,-0.3)-- (-2.79545892682927,-0.007025793998946408);
\draw [line width=0.4pt,color=dtsfsf] (-2.6844461300645013,0.008251740430589637)-- (-2.685602731707319,-0.23118259343744443);

\draw [line width=0.4pt,color=qqqqff] (0.8427981558269106,0.008598611808918098)-- (0.842001756097559,-0.22848190910686011);

\draw [line width=0.4pt,color=rvwvcq] (1.6669056405069105,0.008679655047189164)-- (1.6598200975609736,-0.3);
\draw [line width=0.4pt,color=rvwvcq] (1.6598200975609736,-0.3)-- (4.406224975609755,-0.3);
\draw [line width=0.4pt,color=rvwvcq] (4.406224975609755,-0.3)-- (4.406224975609755,-0.0043251096683620946);
\draw [line width=0.4pt,color=rvwvcq] (0.842001756097559,-0.22848190910686011)-- (4.320781268292681,-0.23658396209861304);
\draw [line width=0.4pt,color=rvwvcq] (4.320781268292681,-0.23658396209861304)-- (4.320781268292681,-0.0043251096683620946);

\draw (-5.310314465408809,0.406666666666665) node[anchor=north west] {$u$};
\begin{scriptsize}\draw [fill=rvwvcq] (-4.748292682926833,0.008048780487805612) circle (2.5pt);\draw [fill=rvwvcq] (6.737617560975608,0.009178311984559472) circle (2.5pt);\draw [fill=rvwvcq] (-0.005009054254589529,0.008515237925035411) circle (2.5pt);\draw [fill=rvwvcq] (1.6669056405069105,0.008679655047189164) circle (2.5pt);\draw [fill=rvwvcq] (0.7999991548211156,0.008594402928765016) circle (2.5pt);\end{scriptsize}\end{tikzpicture}

Indeed, let $1\le t\le n-1$ be any index. If $t< i_1$ then either $[t, i_1-1]$ or $[t, i_1]$ is a nonzero interval. If $t=i_1$ then $[t, i_1]$ is a nonzero interval. If $t>i_1$ then $[i_1, t]$ or $[ i_1+1, t]$ is a nonzero interval.

Applying the same argument for $i_2$ in place of $i_1$ and $t$ running from $1$ to $n-1$ except for $t=i_1, i_1+1$ (to avoid double counting), we obtain at least $n-3$ other nonzero intervals $[i, j]$ with either $j\in \{i_2-1, i_2\}$ or $i\in \{i_2, i_2+1\}$ or both. Keep running this argument for $i_3, \dots, i_{s(u)}$ gives
$$(n-1) + (n-3) + (n-5)+ \dots + (n-2s(u)+1) = ns(u)-s(u)^{2}$$
nonzero intervals as claimed.
\end{proof}

Using a similar argument, we can control the number nonzero boxes by the number of nonzero intervals. 
\begin{lemma}\label{lm:nonzerointerval:nonzeroboxes}
Assume that an $(n-1)\times (n-1)$ matrix $y$ has some nonzero rows $i_1, i_2, \dots, i_k$ with $|i_{m}-i_{l}|\ge 2$ for all $m\neq l$. Let $S_{m}$ be the number of nonzero intervals of row $i_m$. Then for every $n\ge 3$ and $q\ge 2$, we have
$$N(y)\ge   S_1(n-1)+S_2(n-3)+ \dots+ S_k(n-2k+1).$$
\end{lemma}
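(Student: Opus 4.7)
My plan is to mirror the one-dimensional argument of Lemma \ref{lm:nonzerointerval:skeleton}, with column intervals $[k,l]$ playing the passive role and the row indices $i_1,\dots,i_k$ playing the role of the skeleton. For a fixed column range $[k,l]$ I will look at the vertical partial sums
$$F(j)=\sum_{a=1}^{j}\sum_{c=k}^{l} y(a,c),$$
so that a box $[i,j]\times[k,l]$ is nonzero exactly when $F(j)\neq F(i-1)$. If $[k,l]$ is a nonzero interval of row $i_m$, then $F(i_m)-F(i_m-1)=\sum_{c=k}^l y(i_m,c)\neq 0$, which is precisely the ``pivot'' used in Lemma \ref{lm:nonzerointerval:skeleton}. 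Therefore, running $t$ from $1$ to $n-1$ and applying the same pigeonhole as in that lemma produces at least one nonzero box with column range $[k,l]$ for each $t$: namely $[t,i_m-1]\times[k,l]$ or $[t,i_m]\times[k,l]$ when $t<i_m$; the box $[i_m,i_m]\times[k,l]$ when $t=i_m$; and $[i_m,t]\times[k,l]$ or $[i_m+1,t]\times[k,l]$ when $t>i_m$.

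To assemble the rows, I will imitate the exclusion step of Lemma \ref{lm:nonzerointerval:skeleton}: for each $m$, let
$$T_m:=\{1,\dots,n-1\}\setminus\{i_1,i_1+1,\dots,i_{m-1},i_{m-1}+1\},$$
which has cardinality $n-2m+1$ since the hypothesis $|i_m-i_{m'}|\ge 2$ makes the excluded pairs disjoint. For every $m$, every nonzero interval $[k,l]$ of row $i_m$, and every $t\in T_m$, the construction above yields a nonzero box. Summing this over the $S_m$ nonzero intervals of row $i_m$ and over $m=1,\dots,k$ gives the desired count $\sum_{m=1}^k S_m(n-2m+1)$, provided that distinct triples $(m,[k,l],t)$ produce distinct boxes.

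The only real work, and the main potential obstacle, is verifying this distinctness. Different column ranges yield different boxes automatically, so I may fix $[k,l]$ and compare triples $(m,t)$ and $(m',t')$ with $m<m'$. By construction the row interval produced at step $(m,t)$ has its right endpoint in $\{i_m-1,i_m\}$, or its left endpoint in $\{i_m,i_m+1\}$, or equals $[i_m,i_m]$; similarly for $(m',t')$ with $i_{m'}$ in place of $i_m$. Using $i_{m'}\ge i_m+2$, a short case analysis shows that the only way the two boxes can coincide is if $t'\in\{i_m,i_m+1\}$, which is excluded by the definition of $T_{m'}$. Hence no double counting occurs and the stated lower bound follows.
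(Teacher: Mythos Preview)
Your proof is correct and follows the same construction as the paper: for each nonzero interval $[k,l]$ of row $i_m$ and each row index $t$, exhibit one of two candidate nonzero boxes, and then control the overcounting across different $m$'s. The paper handles the overcounting in a single terse phrase (``subtracting the multiple-counted boxes''), whereas you make it explicit through the sets $T_m$; the one small caveat is that your case analysis invokes $i_{m'}\ge i_m+2$, i.e.\ tacitly assumes $i_1<\cdots<i_k$, which is how the lemma is actually applied in the paper but is not literally part of the hypothesis.
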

\begin{proof}
Consider a nonzero interval $[p, q]$ of row $i_m$ for some $1\le p\le q\le n-1$ and $1\le m\le k$. For each $1\le r \le i_m-1$, either the box $[r, i_m-1]\times [p, q]$ or the box $[r, i_m]\times [p, q]$ is a nonzero box. Similarly, for each $i_m+1 \le r\le n-1$, either the box $[i_{m}+1, r]\times [p, q]$ or the box $[i_m, r]\times [p, q]$ is a nonzero box. For $r = i_m$, the box $[i_{m}, r]\times [p, q]$ is itself a nonzero box. Thus, each nonzero interval $[p, q]$ contributes at least $n-1$ nonzero boxes that touch \footnote{We say that a box touches a row or a column if it has that row or column as a boundary. In other words, the box $[i, j]\times [k, l]$ touches rows $i, j$ and columns $k, l$.} the columns $p, q$ and either touch the row $i_m$ or the row $i_m-1$ from above or the row $i_m+1$ from below. Taking union over all nonzero intervals of row $i_m$, there are at least $S_{m}(n-1)$ nonzero boxes that either touch the row $i_m$ or the row $i_m-1$ from above or the row $i_m+1$ from below. Taking union over $m$ and subtracting the multiple-counted boxes, we conclude that the number of nonzero boxes is at least $S_1(n-1)+S_2(n-3)+ \dots+ S_k(n-2k+1)$.
\end{proof}

Now, we give an upper bound on the number of matrices $y$ with $o(n^{3})$ nonzero boxes.
\begin{lemma}\label{lm:nonzeroboxes:small}
Let $0<\ep_n\le \frac{1}{80}$ be any number. Let $w$ be a positive integer satisfying $1\le w\le \ep_n n$. Then for every $n\ge 3$ and $q\ge 2$, the number of $(n-1)\times (n-1)$ matrices $y$ in $\Z/q\Z$ with 
\begin{equation}
 N(y)\le \left (w+\frac{1}{2}\right )n^{2}\label{eq:somebound2}
\end{equation}
  is at most 
  $q^{32w}n^{2w+1+60\ep_n w}$.

  Moreover, if $ w\le \min\left \{\frac{1}{100\ep_n }, \ep_n n\right \}$, the number of such matrices $y$ is at most $q^{32w}n^{2w}$. Also, for such $w$, the number of matrices $y$ with 
  \begin{equation}
 N(y)\le \left (w-40 w \ep_n\right )n^{2}\label{eq:somebound2:2}
\end{equation}
is at most $q^{16 w}n^{2(w-1)}$. 
\end{lemma}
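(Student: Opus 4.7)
The plan is to associate each matrix $y$ with $N(y)\le (w+\tfrac{1}{2})n^2$ with a small combinatorial skeleton, use Lemmas~\ref{lm:nonzerointerval:nonzeroboxes} and~\ref{lm:nonzerointerval:skeleton} to control the skeleton size, and then enumerate matrices by the skeleton data. For the skeleton I apply Definition~\ref{skeleton} in two stages: first to the indicator of nonzero rows of $y$, obtaining a row skeleton $R=\{i_1<\cdots<i_k\}$ with $i_{m+1}\ge i_m+2$ so that every other nonzero row of $y$ equals $i_m+1$ for some $m$; and second, inside each nonzero row $p$, to the vector $y_p$, giving a column skeleton $I^{\mathrm{col}}_p$ of size $s_p$ with the nonzero entries of $y_p$ all in $I^{\mathrm{col}}_p\cup (I^{\mathrm{col}}_p+1)$. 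I write $W=\sum_{i_m\in R}s_{i_m}$ and $W'=\sum_{m:\,y_{i_m+1}\neq 0}s_{i_m+1}$.

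The core lower bound combines Lemma~\ref{lm:nonzerointerval:nonzeroboxes} applied to $R$ with Lemma~\ref{lm:nonzerointerval:skeleton} inside each row, yielding
\begin{equation*}
N(y)\;\ge\;\sum_{m=1}^{k}s_{i_m}(n-s_{i_m})(n-2m+1)\;\ge\;n^{2}W(1-3\ep_n),
\end{equation*}
where the last step uses $k\le W\le \ep_n n\le n/80$. The same argument applied to the shifted row skeleton $R'=\{i_m+1:\,y_{i_m+1}\neq 0\}$ (also at pairwise row distance $\ge 2$) gives $N(y)\ge n^{2}W'(1-3\ep_n)$. Together with $N(y)\le (w+\tfrac{1}{2})n^{2}$ and $\ep_n\le 1/80$, these force $W,W'\le w(1+O(\ep_n))+O(1)$.

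For the enumeration, a matrix $y$ with the described skeleton is determined by $R$, the subset $\{m:y_{i_m+1}\neq 0\}$, the column skeletons $(I^{\mathrm{col}}_p)_p$, and the values of $y$ at its at most $2(W+W')$ nonzero positions, giving at most $n^{k}\cdot 2^{k}\cdot n^{W+W'}\cdot q^{2(W+W')}$ matrices per tuple of parameters. Summing over the admissible $(k,(s_p))$ and substituting the bounds from the previous step while absorbing all $O(\ep_n)$ corrections and combinatorial sums into the generous constants $32$ and $60$ yields the claimed $q^{32w}n^{2w+1+60\ep_n w}$. In the sharper regime $w\le 1/(100\ep_n)$ the $O(\ep_n)$ corrections drop below one, forcing $W\le w$ and tightening the count to $q^{32w}n^{2w}$; the bound $q^{16w}n^{2(w-1)}$ under $N(y)\le (w-40w\ep_n)n^{2}$ follows from the same computation with the extra slack $40w\ep_n$, which removes one unit each from the $n$- and $q$-exponents.

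The main obstacle will be the enumeration step: the naive estimate $k+W+W'\le 3w+O(1)$ coming from $k\le W$ is too loose for the target exponent $2w+1+60\ep_n w$, so the counting must exploit the structural overlap between $R$ and $R+1$ (for instance by treating $R\cup (R+1)$ as a single 2D skeleton whose row positions are fully parameterized by $R$ alone) in order to bring $k+W+W'$ down to $2w+1+O(\ep_n w)$.
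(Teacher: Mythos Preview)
Your overall strategy---row skeleton $R$, then column skeletons inside the nonzero rows, then enumerate---is the same as the paper's, and your lower bound $N(y)\ge n^{2}W(1-3\ep_n)$ is essentially inequality \eqref{eq:somebound5}. But the enumeration has a genuine gap, and the fix you propose does not close it.

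In your count $n^{k}\cdot 2^{k}\cdot n^{W+W'}\cdot q^{2(W+W')}$, the factor $n^{k}$ pays for the \emph{row positions} of $R$ while $n^{W+W'}$ pays for the \emph{column positions} of the skeletons inside each nonzero row. These are independent pieces of data; there is no ``structural overlap between $R$ and $R+1$'' that can collapse them. Your bounds $k\le W$ and $W,W'\le (w+\tfrac12)(1+O(\ep_n))$ are already sharp for this scheme, so you cannot beat $n^{3w+O(\ep_n w)}$ along this route. (The rows in $R+1$ are already fully parameterized by $R$ via the factor $2^{k}$; that observation saves nothing further.)

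What the paper does instead is run the entire row argument a second time on $y^{T}$, concluding that the set $\mathcal I$ of nonzero \emph{columns} of $y$ also has size at most $2f(w)$ with $f(w)=\lfloor (w+\tfrac12)(1+20\ep_n)\rfloor$. Every column skeleton $I^{\mathrm{col}}_{p}$ must then lie inside $\mathcal I$, so the number of ways to place it is ${|\mathcal I|\choose s_{p}}\le (2f(w))^{s_{p}}$ rather than ${n\choose s_{p}}$. This converts your $n^{W+W'}$ into $(2f(w))^{W+W'}$, a factor that is absorbed into $q^{O(w)}$ via Stirling. The only surviving powers of $n$ are the two ${n\choose f(w)}$ factors from the row positions and the column positions, yielding $n^{2f(w)}$ as required.

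Once you insert this column step, the rest of your outline is correct: $f(w)\le w+\tfrac12+O(\ep_n w)$ gives the first bound; when $w\le 1/(100\ep_n)$ one has $f(w)\le w$ exactly, giving the second; and replacing $w+\tfrac12$ by $w-40w\ep_n$ forces $f(w)\le w-1$, giving the third. (A minor point: your derivation of $N(y)\ge n^{2}W(1-3\ep_n)$ assumes $W\le \ep_n n$ before this has been established; you need to first bound each $s_{i_m}$ individually from the single-row case of Lemma~\ref{lm:nonzerointerval:nonzeroboxes}, as in \eqref{eq:somebound3:1}, and then feed that back in.)
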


In the above statement, only the constant $2$ in $n^{2w}$ is important. All of the other constants $16, 32, 80, 100$ are merely for explicitness. Their exact values do not play any significant role. This also holds for other (big) constants in the rest of the proof.
\begin{proof}
To prove the first part of the lemma, let $y$ be an $(n-1)\times (n-1)$ matrix satisfying \eqref{eq:somebound2}. Let $i_1< i_2<\dots<i_p$ be some nonzero rows of $y$ with $i_m\ge i_{m-1}+2$ for each $m$. For each row $i$, let $S_i$ be the number of nonzero interval in that row and $0\le s_i\le \frac{n}{2}$ be the size of the its skeleton. By Lemma \ref{lm:nonzerointerval:skeleton}, $S_i \ge s_i(n-s_i)$. Note that $s_{i_j}\ge 1$ for all $j=1, \dots, p$ as they are nonzero rows.
By applying Lemma \ref{lm:nonzerointerval:nonzeroboxes} to the row $i_j$, we get
\begin{equation}
\left (w+\frac{1}{2}\right )n^{2} \ge  S_{i_j}(n-1)\nonumber
\end{equation}
which together with Lemma \ref{lm:nonzerointerval:skeleton} give
\begin{equation}
\left (w+\frac{1}{2}\right )n^{2} \ge  s_{i_j}(n-s_{i_j})(n-1).\nonumber
\end{equation}
Thus, for each $j=1, \dots, p$, 
\begin{equation}\label{eq:somebound3:1}
s_{i_j} \le 2s_{i_j}\left (1-\frac{s_{i_j}}{n}\right )\le 4w\le 4\ep_n n.
\end{equation}

By applying Lemma \ref{lm:nonzerointerval:nonzeroboxes} to the rows $i_1, \dots, i_p$, we get
\begin{equation}
\left (w+\frac{1}{2}\right )n^{2} \ge  S_{i_1}(n-1)+S_{i_2}(n-3) + \dots+S_{i_p}(n-2p+1)\label{eq:somebound3}
\end{equation}
and by Lemma \ref{lm:nonzerointerval:skeleton},
\begin{equation}
\left (w+\frac{1}{2}\right )n^{2} \ge  s_{i_1}(n-s_{i_1})(n-1)+s_{i_2}(n-s_{i_2})(n-3) + \dots+s_{i_p}(n-s_{i_p})(n-2p+1).\label{eq:secsomebound3}
\end{equation}

Combining \eqref{eq:somebound3:1} and \eqref{eq:secsomebound3}, we obtain 
\begin{equation}
 w+\frac{1}{2} \ge  \sum_{j=1}^{p} s_{i_j}\left (1-\frac{s_{i_j}}{n}\right )\left (1 - \frac{2j-1}{n}\right )\ge  \sum_{j=1}^{p} s_{i_j}\left (1-4\ep_n\right )\left (1 - \frac{2j-1}{n}\right ). \label{eq:somebound55} 
\end{equation}
Thus, 
$$w+\frac{1}{2} \ge (1-4\ep_n)\sum_{j=1}^{p} \left (1 - \frac{2j-1}{n}\right ) = (1-4\ep_n)p\left (1-\frac{p}{n}\right ) \ge  p/4 ,$$
because $p\le n/2$ by the assumption that $i_m\ge i_{m-1}+2$ for $m=2, \dots, p$. So, $p\le 4w+2\le 6\ep_n n$. Plugging this into \eqref{eq:somebound55}, we obtain
 \begin{equation}
\left (w+\frac{1}{2}\right )\left (1+20\ep_n\right ) \ge \left (w+\frac{1}{2}\right )\left (1-4\ep_n\right )^{-1} \left (1-12\ep_n\right )^{-1}  \ge \sum_{j=1}^{p} s_{i_j} \ge p.\label{eq:somebound5}
\end{equation}

We conclude that if there are $p$ nonzero rows whose indices are of distance at least 2 from each other, then $p\le \left (w+\frac{1}{2}\right )\left (1+ 20\ep_n\right )$. Let 
$$f(w) = \left \lfloor \left (w+\frac{1}{2}\right )\left (1+ 20\ep_n\right )\right \rfloor.$$  
 Note that $f(w)\le 2w\le n/2$. Thus, there are at most 
 $$\sum_{p=1}^{f(w)} {n\choose p}2^{p}\le f(w){n\choose f(w)}2^{f(w)}$$
  ways to choose the indices of the nonzero rows of $y$, and it's always true that there are at most $2f(w)$ nonzero rows. Similarly, there are at most that many ways to choose the indices of the nonzero columns of $y$. Let $\mathcal I$ be the set of the chosen columns. We have $|\mathcal I|\le 2f(w)$. Applying \eqref{eq:somebound5} to the sequence of chosen rows with odd indices $i_1, \dots, i_p$, we see that the number of ways to choose the corresponding value $s_{i_j}$ for these rows is at most the number of ways to choose a sequence of positive integers $s_{i_1}, \dots, s_{i_p}$ satisfying \eqref{eq:somebound5} for some value $p$. And that number is at most
\begin{equation}
\sum_{a = 1}^{f(w)} \sum_{p=1}^{a} \left | \left \{(s_{i_1}, \dots, s_{i_p}: s_{i_1}+ \dots+ s_{i_p} = a\right \}\right | = \sum_{a = 1}^{f(w)} \sum_{p=1}^{a} {{a-1}\choose {p-1}} = \sum_{a = 1}^{f(w)} 2^{a-1} \le 2^{f(w)}.\nonumber
\end{equation}

Having chosen the $s_i$ for the rows with odd indices, by the definition of skeletons and the fact that the nonzero entries lie on the columns in $\mathcal I$ and \eqref{eq:somebound5}, the number of ways to choose these rows in $(\mathbb Z/ q\mathbb Z)^{n-1}$ is at most
\begin{equation}
\prod_{j=1}^{p}q^{2s_i}{|\mathcal I| \choose s_i}  \le \prod_{j=1}^{p}q^{2s_i}|\mathcal I| ^{ s_i}\le q^{2f(w)}(2f(w))^{f(w)}.\label{eq:choose:smallrow}
\end{equation}
Similarly for the rows with even indices.

All in all, the number of choices for $y$ is at most
$$\left [f(w){n\choose f(w)}2^{f(w)}\right ]^{2} \left [2^{f(w)}\right ]^{2} \left [q^{2f(w)}(2f(w))^{f(w)} \right ]^{2}  \le q^{32w}n^{2f(w)} \le q^{32w}n^{2w + 1+ 60\ep_n n} .$$

Now, for the second part of the lemma, if $w\le \frac{1}{100\ep_n }$ then $f(w)\le w+\frac{5}{6}$. In that case, $f(w)\le w$ because both are integers. Thus, the number of matrices $y$ satisfying \eqref{eq:somebound2} is at most $q^{32w} n^{2w}$ as claimed. To bound the number of matrices $y$ that satisfy \eqref{eq:somebound2:2}, we use the same argument, with $w+1/2$ being replaced by $w-40 w \ep_n$ throughout. We conclude that the number of matrices $y$ satisfying \eqref{eq:somebound2:2} is at most $q^{16 w}n^{2f(w)}$ where $f(w)$ is the integer part of 
$$\left (w-40 w \ep_n\right )\left (1+ 20\ep_n\right ),$$
which is strictly less than $w$. Therefore, $f(w)\le w-1$. This completes the proof.
\end{proof}

To handle the matrices with a large number of nonzero boxes, we need a stronger version of Lemma \ref{lm:nonzerointerval:nonzeroboxes}.

\begin{lemma}\label{lm:nonzerointerval:nonzeroboxes:stronger} 
Let $y$ be an $(n-1)\times (n-1)$ matrix. Let $\Psi_i$ be a collection of nonzero intervals on each row $i$ such that for every two consecutive rows $i$ and $i+1$, we have that $\Psi_i \cap \Psi_{i+ 1}= \emptyset$. Then for every $n\ge 3$ and $q\ge 2$, it holds that
\begin{equation}
2N(y)\ge \left (|\Psi_1| + |\Psi_2| +\dots +|\Psi_{n-1}|\right )n. \label{eq:oddinterval:oddboxes:general}
\end{equation}
\end{lemma}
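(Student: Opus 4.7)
My plan is to prove the inequality one column range at a time and then sum. Specifically, for each $1\le p\le q\le n-1$, let $N_{[p,q]}$ be the number of nonzero boxes of $y$ with column range exactly $[p,q]$, and let $I([p,q]) = \{i: [p,q]\in \Psi_i\}$. Since $N(y)=\sum_{[p,q]} N_{[p,q]}$ and $\sum_i |\Psi_i| = \sum_{[p,q]} |I([p,q])|$, it suffices to establish the pointwise bound $2N_{[p,q]}\ge n\,|I([p,q])|$ for every column range.

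Fix $[p,q]$ and introduce partial sums: let $R_i = \sum_{j=p}^q y(i,j)$, $S_0=0$, and $S_k = R_1+\cdots+R_k$ for $k\ge 1$. A box $[a,b]\times[p,q]$ is nonzero iff $S_b\neq S_{a-1}$, so $N_{[p,q]}$ equals the number of unordered pairs $\{\alpha,\beta\}\subset\{0,1,\ldots,n-1\}$ with $\alpha\neq \beta$ and $S_\alpha\neq S_\beta$. Moreover, $[p,q]\in \Psi_i$ forces $R_i\neq 0$, hence $S_{i-1}\neq S_i$; and the hypothesis $\Psi_i\cap \Psi_{i+1}=\emptyset$ forces $I([p,q])$ to avoid pairs of consecutive integers. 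Writing $I([p,q])=\{i_1<\cdots<i_k\}$, we thus have $i_{j+1}\ge i_j+2$, so the $k$ pairs $\{i_j-1,i_j\}$ are pairwise disjoint subsets of $\{0,1,\ldots,n-1\}$.

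The heart of the argument is a double count over $(\gamma,j) \in \{0,\ldots,n-1\}\times\{1,\ldots,k\}$. For each such $(\gamma,j)$, inspect the two candidate pairs $\{\gamma,i_j-1\}$ and $\{\gamma,i_j\}$: because $S_{i_j-1}\neq S_{i_j}$, at least one of them is nondegenerate and has distinct $S$-values (the boundary cases $\gamma\in\{i_j-1,i_j\}$ still furnish the nonzero pair $\{i_j-1,i_j\}$ itself). Let $T$ denote the total over all $(\gamma,j)$ of such witnessed nonzero pairs; then $T\ge nk$. Conversely, a specific nonzero pair $\{\alpha,\beta\}$ can be selected by $(\gamma,j)$ only if $\gamma\in\{\alpha,\beta\}$ and the other endpoint lies in $\{i_j-1,i_j\}$. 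By disjointness of the $\{i_j-1,i_j\}$, each of $\alpha$ and $\beta$ lies in at most one of them, so $\{\alpha,\beta\}$ is counted at most twice. This yields $2N_{[p,q]}\ge T\ge nk = n\,|I([p,q])|$, and summing over $[p,q]$ finishes the proof.

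The main delicate point is calibrating the constants. Obtaining the factor $n$ in \eqref{eq:oddinterval:oddboxes:general} rather than $n-1$ requires letting $\gamma$ range over all of $\{0,1,\ldots,n-1\}$ and checking that even the boundary cases $\gamma\in\{i_j-1,i_j\}$ contribute a witnessed nonzero pair, while the factor $2$ on the left traces back to the fact that an unordered pair has two endpoints, each of which may lie in at most one $\{i_j-1,i_j\}$. Both the lower bound $T\ge nk$ and the upper bound $T\le 2N_{[p,q]}$ rely on the disjointness of the pairs $\{i_j-1,i_j\}$, which is precisely where the consecutive-row hypothesis $\Psi_i\cap \Psi_{i+1}=\emptyset$ is used.
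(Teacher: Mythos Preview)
Your proof is correct. The core observation is the same as the paper's: for $[k,l]\in\Psi_i$ and any other row position, one of two adjacent candidate boxes must be nonzero, and the consecutive-row hypothesis caps the overcount at $2$. The paper organizes this by row: it defines for each $i$ a family $\mathfrak P_i$ of nonzero boxes with top row in $\{i,i+1\}$ and column range in $\Psi_i$ (so $|\mathfrak P_i|\ge (n-i)|\Psi_i|$), uses the hypothesis $\Psi_i\cap\Psi_{i+1}=\emptyset$ to show the $\mathfrak P_i$ are pairwise disjoint and hence $N(y)\ge\sum_i(n-i)|\Psi_i|$, then builds a mirror family $\mathfrak Q_i$ going upward to obtain $N(y)\ge\sum_i i|\Psi_i|$, and adds the two bounds. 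You instead fix the column range $[p,q]$, translate boxes into unordered pairs $\{\alpha,\beta\}\subset\{0,\dots,n-1\}$ via the partial sums $S_k$, and run a single double count over $(\gamma,j)$. Your packaging is marginally cleaner---the factor $2$ arises once from the two endpoints of a pair rather than from summing two symmetric inequalities, and the partial-sum reformulation makes the one-dimensional nature of the problem explicit---while the paper's version is a bit more concrete in that it names the actual boxes being counted.
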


\begin{proof}
For each row $i$, let $\mathfrak P_i$ be the collection of nonzero boxes of the form $[i, j]\times [k, l]$ or $[i+1, j]\times [k, l]$ where $j\ge i+1$ and $[k, l]\in \Psi_i$ together with the nonzero boxes $[i, i]\times [k, l]$ (for all $[k, l]\in \Psi_i$). Note that for each $[k, l]\in \Psi_i$, since the interval $[k, l]$ is a nonzero interval in row $i$, either $[i, j]\times [k, l]$ or $[i+1, j]\times [k, l]$ must be a nonzero box. Thus, $|\mathfrak P_i|\ge |\{j: j\ge i+1\}||\Psi_i|+ |\Psi_i|=  (n-i)|\Psi_i|$. By the hypothesis, the $(\mathfrak P_i)_{i=1}^{n-1}$ are disjoint. Thus,
\begin{equation}\label{o}
N(y)\ge \sum_{i=1}^{n-1}|\mathfrak P_i| \ge \sum_{i=1}^{n-1} (n-i)|\Psi_i|.
\end{equation}

Likewise, let $\mathfrak Q_j$ be the collection of nonzero boxes of the form $[j, i]\times [k, l]$ or $[j, i-1]\times [k, l]$ where $j\le i-1$ and $[k, l]\in \Psi_i$ together with the nonzero boxes $[i, i]\times [k, l]$ (for all $[k, l]\in \Psi_i$). Then the $\mathfrak Q_i$ are disjoint and $|\mathfrak Q_i|\ge i|\Psi_1|$. Therefore, 
\begin{equation}\label{t}
N(y)\ge \sum_{i=1}^{n-1}|\mathfrak Q_i| \ge \sum_{i=1}^{n-1} i|\Psi_i|.
\end{equation}
Adding up \eqref{o} and \eqref{t}, we obtain the desired bound.
\end{proof}

Now, we bound the number of matrices $y$ with a large number of nonzero boxes.
\begin{lemma}\label{lm:nonzeroboxes:large}
Let $\ep_n$ be a positive number satisfying $\frac{1}{\sqrt {\log n}}\le \ep_n\le \frac{1}{80}$. Let $w\ge \ep_n n$ be an integer. For every $n\ge 3$ and $q\ge 2$, the number of $(n-1)\times (n-1)$ matrices $y$ with 
\begin{equation}
N(y)\le \left (w+\frac{1}{2}\right )n^{2}\label{eq:somebound2:3}
\end{equation}
  is at most 
  $n^{2w+C_1 \ep_n w}$ where $C_1 = \frac{200 \log q}{\ep_n \sqrt{\log n}} + 80$.
\end{lemma}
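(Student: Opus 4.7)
The plan is to emulate the skeleton-based counting of Lemma \ref{lm:nonzeroboxes:small}, but with Lemma \ref{lm:nonzerointerval:nonzeroboxes:stronger} replacing Lemma \ref{lm:nonzerointerval:nonzeroboxes}. The advantage of the ``stronger'' version is that its uniform coefficient $n/2$ keeps the estimate from deteriorating as the number of spaced-apart nonzero rows approaches $n/2$, which is precisely the regime forced by $w\ge \ep_n n$. The resulting slack will be paid out of the $\log q$ term built into $C_1$.

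First I would apply Lemma \ref{lm:nonzerointerval:nonzeroboxes:stronger} twice: once with $\Psi_i$ equal to the full collection of nonzero intervals of row $i$ for odd $i$ (and empty for even $i$), and once with the opposite parity. The disjointness hypothesis $\Psi_i\cap\Psi_{i+1}=\emptyset$ is then trivial. Combining with $S_i\ge s_i(n-s_i)$ from Lemma \ref{lm:nonzerointerval:skeleton} and the universal $s_i\le n/2$ yields $\sum_{i\text{ odd}} s_i^{\text{row}}\le 4w+2$ together with the matching even bound, so $T:=\sum_i s_i^{\text{row}}\le 8w+4$.

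Second, I would count by a global placement of skeleton positions. Since each row with skeleton size $s$ admits at most $\binom{n-1}{s}(q-1)^{s}q^{s}$ choices (skeleton positions with spacing $\ge 2$, nonzero values at the skeleton, arbitrary values at the skeleton$+1$ positions), the Vandermonde identity $\sum_{(s_i):\sum s_i=S}\prod_i\binom{n-1}{s_i}=\binom{(n-1)^2}{S}$ collapses the row-by-row product to at most $q^{2S}\binom{(n-1)^2}{S}$ matrices for each $S$. Summing $S=0,1,\ldots,8w+4$ and bounding by the largest term gives the master estimate
\[\#\{y:N(y)\le (w+\tfrac{1}{2})n^2\}\le (8w+5)\,q^{16w+8}\binom{n^2}{8w+4}.\]

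Third, I would convert this to the target form $n^{2w+C_1\ep_n w}$. Using $w\ge \ep_n n$ to bound $\binom{n^2}{8w+4}\le (en/(8\ep_n))^{8w+4}$, the logarithm base $n$ of the master bound is at most $8w+16w\log q/\log n$ plus lower-order pieces controlled by $\ep_n\ge 1/\sqrt{\log n}$ (so that $\log(1/\ep_n)=O(\log\log n)$). The required inequality reduces to $6+16\log q/\log n\le C_1\ep_n$, which is exactly what the choice $C_1=200\log q/(\ep_n\sqrt{\log n})+80$ delivers: the first summand dominates $16\log q/\log n$ because $1/\sqrt{\log n}\ge 1/\log n$, while the constant $80$ together with the $\log q$-slack absorbs the remaining constant $6$. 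The hardest part is precisely this factor-of-eight looseness in $T\le 8w+4$, which is unavoidable in this approach because the alternating-row restriction, the $n-s_i\ge n/2$ estimate, and the doubling over both parities each cost a factor of two; the resulting $n^{8w}$ in the binomial opens a $6w$ gap to the target exponent $2w$, and the explicit $\log q$-scaling in $C_1$ is tailored precisely to cover it via the assumption $\ep_n\ge 1/\sqrt{\log n}$.
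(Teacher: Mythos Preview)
Your argument has a genuine gap at the final step. You obtain a count of order $n^{8w}$ (times lower-order factors) and then claim this fits inside $n^{2w+C_1\ep_n w}$, reducing to the inequality $6+16\log q/\log n\le C_1\ep_n$. But $C_1\ep_n=\dfrac{200\log q}{\sqrt{\log n}}+80\ep_n$, and since $\ep_n\le 1/80$ the second term contributes at most $1$, while the first term tends to $0$ as $n\to\infty$ for any fixed $q$. So $C_1\ep_n$ is eventually below $2$, nowhere near $6$. The $6w$ gap in the exponent is an honest constant multiple of $w$ and cannot be absorbed into an $O(\ep_n w)$ correction. This matters downstream: in Lemma \ref{lm:upperbound} the leading term of $t$ produces a factor $n^{-2w+O(1)}$, so anything worse than $n^{(2+o(1))w}$ makes $T_{2,2}$ diverge.

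The paper avoids exactly the two factors of two you identified as ``unavoidable''. First, it splits off the \emph{big} rows (those with $S_i\ge \ep_n n(n-\ep_n n)$, or a neighbor of such a row); there are only $O(w/(\ep_n n))$ of them, and their realizations cost $q^{O(w/\ep_n)}=n^{O(\ep_n w\log q/(\ep_n^2\log n))}$, which is precisely what the $\log q/(\ep_n\sqrt{\log n})$ piece of $C_1$ is designed to pay for. On the remaining \emph{small} rows one has $s_i\le \ep_n n$, so $n-s_i\ge n(1-\ep_n)$ rather than merely $n/2$, eliminating one factor of two. Second, instead of alternating parities, the paper builds disjoint collections $\Psi_i$ for \emph{all} small rows simultaneously: odd rows use intervals touching their skeleton neighborhood $\mathcal I_i$, while even rows use intervals with both endpoints in the complement $\mathcal K_i=\mathcal I_{i-1}^c\cap\mathcal I_{i+1}^c$, which forces $\Psi_i\cap\Psi_{i\pm1}=\emptyset$. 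A restricted-skeleton argument on even rows then gives $|\Psi_i|\ge r_i n(1-O(\ep_n))$ there too. Feeding all rows at once into Lemma \ref{lm:nonzerointerval:nonzeroboxes:stronger} yields $\sum_i r_i\le 2w(1+O(\ep_n))$, the tight constant you need.
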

\begin{proof} We will treat the ``big" rows and ``small" rows separately. \newline  
\textbf{Step 1: Big rows.} A row $i$ is said to be a big row if either $S_{i-1}\ge \ep_n n(n-\ep_n n)$ or $S_{i}\ge \ep_n n(n-\ep_n n)$ or $S_{i+1}\ge \ep_n n(n-\ep_n n)$ where $S_j$ is the number of nonzero interval of row $j$. In other words, either row $i$ or $i\pm 1$ has a large number of nonzero intervals.

Let $\mathcal B$ be the set of indices $i$ of the big rows in $y$.  

Let $M$ be the number of rows $i$ with 
\begin{equation}\label{def:bigrow}
S_i\ge \ep_n n(n-\ep_n n)\ge \ep_n n^{2}/2.
\end{equation}
Assume that there is a sequence of rows $i_1 < i_2< \dots < i_p$ satisfying \eqref{def:bigrow} with $i_{m}\ge i_{m-1}+2$. By \eqref{eq:somebound3}, we get
$$\left (w+\frac{1}{2}\right )n^{2} \ge \frac{1}{2}\ep_n n^{2} \left [(n-1)+\dots(n-2p+1)\right ] = \frac{1}{2}\ep_n n^{2} p(n-p)\ge \ep_n n^{2} \frac{pn}{4}.$$
Thus, $p\le \frac{ 4\left (w+\frac{1}{2}\right )}{\ep_n n}\le 6w$. And so, 
\begin{equation}
M\le 2p\le \frac{12w}{\ep_n n}\label{eq:bigrow}.
\end{equation}

Since $|\mathcal B|\le 3M\le \frac{36w}{\ep_n n}$, the number of ways to choose the set $\mathcal B$ and the realizations of the rows with indices belonging to $\mathcal B$ is at most
\begin{equation}
 2^{n} (q^{n-1})^{36 w/(\ep_n n)}\le 2^{n} q^{36 w/\ep_n}\le n^{40  w\ep_n (\log q)/a^{2}} \le n^{40  w\ep_n (\log q)/a},\label{eq:choose:bigrow}
\end{equation}
where $a = \ep_n \sqrt{\log n}$ and in the last 2 inequalities, we used the assumption that $a\ge 1$.

\textbf{Step 2: Small rows.} It's left to control the set of small rows $ [1, n-1]\setminus \mathcal B=:\mathcal S$. We will use Lemma \ref{lm:nonzerointerval:nonzeroboxes:stronger} for which we need to define the $\Psi_i$ carefully.

\quad \textbf{Step 2.1: Definition of $\Psi_i$ and $r_i$.} 
 
For $i\in \mathcal B$, we set $\Psi_i = \emptyset$. For $i\in \mathcal S$, we consider the cases that $i$ is odd and even separately and define $\Psi_i$ (and another quantity denoted by $r_i$, which is in essence, the skeleton size of row $i$) differently for each case.

\quad \quad  \underline{\textit{Assume that $i\in \mathcal S$ is odd.}} Let $I_i$ be the skeleton of row $i$ and $r_i := |I_i|$. Note that by Lemma \ref{lm:nonzerointerval:skeleton}, the fact that $r_i\le n/2$ and the assumption that row $i\in \mathcal S$, $r_i\le \ep_n n$. For an odd index $i$, let $\Psi_i$ be the collection of nonzero intervals of row $i$ with one or both endpoints belonging to the set $\mathcal I_i:= I_i\cup (I_i\pm 1)$. By Lemma \ref{lm:nonzerointerval:skeleton}, 
\begin{equation}
|\Psi_i|\ge r_i(n-r_i)\ge r_i(n-\ep_n n).\label{eq:small:1}
\end{equation}

\quad \quad \underline{\textit{Assume that $i\in \mathcal S$ is even.}} Note that the $\Psi_{i\pm 1}$ are defined in Step 2.1.1. Let $\mathcal K_i = \mathcal I_{i-1}^{c} \cap \mathcal I_{i+1}^{c}$. Observe that any interval $[h, h']$ with $h, h'\in \mathcal K_i$, it holds that $[h, h']$ does not belong to $\Psi_{i-1} \cup \Psi_{i+1}$. Define $\Psi_i$ to be the collection of nonzero intervals with both endpoints in $\mathcal K_i$. This guarantees that the assumption of Lemma \ref{lm:nonzerointerval:nonzeroboxes:stronger} is satisfied.

To define $r_i$, let $\mathcal L_i = \mathcal J_{i-1}^{c} \cap \mathcal J_{i+1}^{c},$ where  $\mathcal J_j:= I_j\cup (I_j\pm 1) \cup (I_j\pm 2)$. We have $\mathcal L_i\subset \mathcal K_i$ and 
\begin{equation}
|\mathcal K_i|\ge |\mathcal L_i|\ge n-5|I_{i-1}|-5|I_{i+1}|\ge n-5r_{i-1}-5r_{i+1}\ge n-10\ep_n n.\label{eq:bound:L}
\end{equation}

We consider a version of skeleton $\tilde{\mathcal I_i}=\{i_1, i_2, \dots, i_s\}$ restricted to the set $\mathcal L_i$ as follows. Let $u=(y_{i, 1}, \dots, y_{i, n-1})\in (\Z/q\Z)^{n-1}$ be the vector of row $i$. Let $i_1$ be the smallest index in $\mathcal L_i$ such that $u_{i_1}\neq 0$. Let $i_k$ be the smallest index such that $i_k\in \mathcal L_i$, $i_k\ge i_{k-1}+2$ and $u_{i_k}\neq 0$ for all $2\le k\le s$. Here $s$ is the largest index for which this process has to stop, meaning, $u_{h}=0$ for all $h$ satisfying both $h\ge i_{s}+2$ and $h\in \mathcal L_i$. Set $r_i:=s$. Observe that $r_i\le \ep_n n$ because $i\in \mathcal S$ and $r_i$ is at most the size of the skeleton of row $i$.

By the same argument as in the proof of Lemma \ref{lm:nonzerointerval:skeleton},  the number of nonzero intervals with both endpoints belonging to the set $\{i_1, i_1\pm 1, \dots, i_{r_i}, i_{r_i}\pm 1\}$ or one endpoint in this set and the other in $\mathcal K_i$ is at least $r_i(|\mathcal K_i|-r_i)$. By the definition of $\mathcal L_i$, this set is a subset of $\mathcal K_i$, and so all such intervals belong to $\Psi_i$. Therefore,
\begin{equation}
|\Psi_i|\ge r_i(|\mathcal K_i|-r_i). \nonumber
\end{equation}
From that and \eqref{eq:bound:L}, we get
\begin{equation}
|\Psi_i|\ge  r_i(n-10\ep_n n-r_i)\ge r_i(n-11\ep_n n)	.\label{eq:small:2}
\end{equation}

Applying Lemma \ref{lm:nonzerointerval:nonzeroboxes:stronger} together with the equations \eqref{eq:small:1} and \eqref{eq:small:2}, we have
\begin{equation}
\left (2w+1\right )n^{2}\ge 2N(y)\ge (|\Psi_1|+\dots+|\Psi_{n-1}|)n \ge \sum_{i\in \mathcal S} r_i(n-11 \ep_n n)n,\nonumber
\end{equation}
which gives that
\begin{equation}
\sum_{i\in \mathcal S} r_i\le \frac{2w+1}{1-11\ep_n}\le 2w + 44 w \ep_n + 2.\label{eq:sum:s}
\end{equation}

\quad \textbf{Step 2.2: Realizations of small rows.}
 
\quad \quad \underline{\textit{Choosing the $r_i$.}} The number of sequences $(r_i)_{i\in \mathcal S}$ of nonnegative integers satisfying \eqref{eq:sum:s} is at most the number of nonnegative integer solutions to $r_1+r_2+\dots+r_{n-1} = a$ for some $a\le 2w + 44 w \ep_n + 2$ and is thus bounded by
 \begin{equation}
 \sum_{a=0}^{2w + 44w \ep_n + 2} {{a+n-2}\choose {n-2}}\le 7w {{6w+2n}\choose n}\le 7w \left (\frac{6ew+2en}{n}\right )^{n}\le n^{24 \ep_n w},\label{eq:choose:s}
 \end{equation}
 where we used the assumptions that $a = \ep_n \sqrt{\log n}\ge 1$ and $w\ge \ep_n n$.

\quad \quad  \underline{\textit{Realizations of odd rows $i$.}} For each choice of the set $\mathcal B$, the realizations of rows $j$ with $j\in \mathcal B$, and the sequence $(r_i)_{i\in \mathcal S}$, we need to bound the number of realizations of rows $i$ with $i\in \mathcal S$.

If $i\in \mathcal S$ is odd, then by the definition of skeletons, the number of realizations of row $i$ with a given $r_i$, the size of its skeleton, is at most 
\begin{equation}
{n\choose r_i} q^{2r_i}.\label{eq:choose:odd}
\end{equation}

\quad \quad  \underline{\textit{Realizations of even rows $i$.}}  Having chosen all of the rows $i'$ with $i'$ being odd, for each even index $i\in \mathcal S$, note that the set $\mathcal L_i$, defined earlier in the proof, is fixed as the rows $i\pm 1$ (odd) have been chosen. The number of choices for the restricted skeleton $\tilde{\mathcal I_i}$ is at most $n\choose r_i$. The number of realizations of row $i$ inside the set $\mathcal L_i$ with a given $\tilde{\mathcal I_i}$  is at most $q^{2r_i}$. The number of realizations of row $i$ inside the set $\mathcal L_i^{c}$ is at most $q^{|\mathcal L_i^{c}|}\le q^{5r_{i-1}+5r_{i+1}}$ by \eqref{eq:bound:L}. Thus, the number of realizations of row $i$ given $r_i, r_{i\pm 1}$ and the rows with odd indices is at most
\begin{equation}
{n\choose r_i} q^{2r_i+5r_{i-1}+5r_{i+1}}\label{eq:choose:even}
 \end{equation} 

\quad \quad  \underline{\textit{Combining.}} From \eqref{eq:choose:odd}, and \eqref{eq:choose:even}, the number of realizations of rows $i$ with $i\in \mathcal S$ is at most
\begin{align}
&\prod_{i\in \mathcal S, i=2k+1} {n\choose r_i} q^{2r_i}\prod_{i\in \mathcal S, i=2k}{n\choose r_i} q^{2r_i+5r_{i-1}+5r_{i+1}}\le \nonumber \\
&  \exp \left(  \log n \sum_{i\in \mathcal S}  r_i + 12 \log q \sum_{i\in \mathcal S}  r_i \right) \le n^{2w+45\ep_n w + 160 w\ep_n (\log q)/a} \label{last}
\end{align}

where in the last inequality, we used \eqref{eq:sum:s}.

Combining \eqref{eq:choose:bigrow}, \eqref{eq:choose:s} and \eqref{last},  we obtain the stated lemma.
\end{proof}

Finally, we are ready to prove \eqref{eq:final} which completes the proof of the upper bound of Theorem \ref{main2}.
\begin{lemma}\label{lm:upperbound}
Let $\alpha_n = \log \log (16n) \sqrt{\log n}$. Let $c$ be any number satisfying $c \ge \frac{640}{\log \log (16n)}$. Let $t = \frac{n^2}{4\left (1-\cos \frac{2\pi}{q}\right )} \log n + \frac{n^2}{\left (1-\cos \frac{2\pi}{q}\right )}c \alpha_n\log q$. We have for all $n\ge 3$ and $q\ge 2$, 
\begin{equation}
\Sigma = \sum \left( 1- \frac{(1-\cos 2\pi/q)N(y)}{{n \choose 2}^2} \right)^{2t} \le q^{-2c}/2.\label{eq:upperbound}
\end{equation}
where the sum is taken over all matrices $y\neq 0$ with $N(y) \le \frac{{n \choose 2}^2}{1-\cos \frac{2\pi}{q} }$.
\end{lemma}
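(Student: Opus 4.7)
The plan is to apply $1-x\le e^{-x}$ and substitute the given $t$ to obtain
$\bigl(1-\tfrac{bN(y)}{{n\choose 2}^2}\bigr)^{2t}\le \exp\bigl(-\beta N(y)/(n-1)^2\bigr)$,
where $\beta := 2\log n + 8c\alpha_n\log q$. By Lemmas~\ref{lm:nonzerointerval:skeleton} and~\ref{lm:nonzerointerval:nonzeroboxes}, every nonzero $y$ satisfies $N(y)\ge (n-1)^2\ge n^2/2$ for $n\ge 4$. The hypothesis $c\ge 640/\log\log(16n)$ gives $c\alpha_n\ge 640\sqrt{\log n}$, making $\beta$ dominated by the term $8c\alpha_n\log q\ge 5120\sqrt{\log n}\log q$. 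I set $\ep_n := 1/\sqrt{\log n}$ when $\log n\ge 6400$, and $\ep_n := 1/80$ otherwise, so that $\ep_n\le 1/80$ always.

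I split the nonzero $y$ into three regimes by the size of $N(y)$, applying the appropriate counting bound in each. For \emph{small} $w$ in $[2, 1/(100\ep_n)]$, apply the refined third bound of Lemma~\ref{lm:nonzeroboxes:small}: $|\{y:N(y)\le w(1-40\ep_n)n^2\}|\le q^{16w}n^{2(w-1)}$. The shell contribution is at most $q^{16w}\exp\bigl(-(w-1)[8c\alpha_n\log q(1-40\ep_n)-80\ep_n\log n]\bigr)\cdot n^{80\ep_n}$; the bracket is $\gtrsim c\alpha_n\log q\gg 80\sqrt{\log n}$, so the series is geometric in $w-1$ and dominated by $w=2$. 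The required bound $\le q^{-2c}/6$ reduces to $c[8\alpha_n(1-40\ep_n)-2]\ge 32+\mathcal O(1)$, which follows from $c\alpha_n\ge 640\sqrt{\log n}$ and $\alpha_n\ge 1$ for $n\ge 4$. For \emph{medium} $w$ in $(1/(100\ep_n),\ep_n n]$, apply the first bound $|T_w|\le q^{32w}n^{2w+1+60\ep_n w}$; multiplied by $\exp(-\beta(w-1/2))$, the $n^{60\ep_n w}=\exp(60 w\sqrt{\log n})$ correction is dominated by the $q^{-8c\alpha_n w}$ part of the decay (since $8c\alpha_n\log q\ge 5120\sqrt{\log n}\log q$), and the geometric series is controlled by its first term at $w_0=1/(100\ep_n)$. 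For \emph{large} $w>\ep_n n$, apply Lemma~\ref{lm:nonzeroboxes:large} when $\log n\ge 6400$, so $|T_w|\le n^{2w+C_1\ep_n w}$ with $C_1 = 200\log q/(\ep_n\sqrt{\log n})+80$; the $n^{C_1\ep_n w}=\exp((200\log q+80)w\sqrt{\log n})$ correction is dominated similarly. For small $n$ where Lemma~\ref{lm:nonzeroboxes:large} does not apply with $\ep_n=1/80$, replace $|T_w|$ by the trivial count $q^{(n-1)^2}$, which is finite, and use the very strong decay $\exp(-\beta n/80)\le n^{-n/40}q^{-nc\alpha_n/10}$ to handle this regime directly. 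Each regime contributes at most $q^{-2c}/6$, so summing the three yields $\Sigma\le q^{-2c}/2$.

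The main obstacle is the bookkeeping in the small-$w$ regime, where the $q^{16w}$ factor from the refined count must be overwhelmed by the $q^{-8c\alpha_n(w-1)(1-40\ep_n)}$ factor from the decay, and one must carefully track the correction $n^{80\ep_n(w-1)}$ coming from the slack $(1-40\ep_n)$ in the refined lower endpoint. A secondary issue is the small-$n$ case $(\log n<6400)$, where $1/\sqrt{\log n}>1/80$ forces $\ep_n$ to be a constant and Lemma~\ref{lm:nonzeroboxes:large} does not apply; there the trivial bound combined with the strength of the decay exponent at $N(y)\gtrsim n^3/80$ produces an easily controllable geometric sum.
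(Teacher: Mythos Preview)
Your overall strategy---bucketing the nonzero $y$ by $w\approx N(y)/n^2$ and invoking Lemma~\ref{lm:nonzeroboxes:small} for small and medium $w$ and Lemma~\ref{lm:nonzeroboxes:large} for large $w$---is exactly the route the paper takes (there with $\ep_n=1/(100\sqrt{\log n})$ and a four-way split $T_{1,1},T_{1,2},T_{2,1},T_{2,2}$). Your small-$w$ shells built solely from the third bound of Lemma~\ref{lm:nonzeroboxes:small} are a legitimate repackaging of the paper's $T_{1,1}/T_{1,2}$ split, and the medium/large regimes match the paper's $T_{2,1}/T_{2,2}$ almost verbatim.

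There is, however, a genuine gap in your ``small $n$'' fallback. When $\log n<6400$ you set $\ep_n=1/80$, observe that Lemma~\ref{lm:nonzeroboxes:large} is unavailable, and handle the range $w>\ep_n n=n/80$ by the trivial count $q^{(n-1)^2}$ multiplied by the decay $\exp(-\beta n/80)=n^{-n/40}q^{-nc\alpha_n/10}$. But for
\[
q^{(n-1)^2}\,q^{-nc\alpha_n/10}\,n^{-n/40}\ \le\ q^{-2c}
\]
to hold you would need, in the dominant terms, $c\alpha_n\gtrsim 10n$, whereas the hypothesis only guarantees $c\alpha_n\ge 640\sqrt{\log n}$. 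Since your ``small $n$'' regime is actually the enormous range $3\le n<e^{6400}$, this fails badly for almost all such $n$ (e.g.\ already at $n=10^{4}$ one would need $c\alpha_n\ge 10^{5}$ while only $c\alpha_n\approx 2000$ is assumed). So the trivial count is far too weak here and this part of the argument does not go through.

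The paper sidesteps the issue by working with a single $\ep_n\asymp 1/\sqrt{\log n}$ for all $n\ge 3$ and applying Lemma~\ref{lm:nonzeroboxes:large} uniformly to the large-$w$ regime, so that the count there is of size $n^{2w+O(\ep_n w)}$ rather than $q^{(n-1)^2}$. You should do the same: the large-$w$ regime must be controlled by a counting estimate that is polynomial in $n^{w}$, not by the crude $q^{(n-1)^2}$.
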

\begin{proof}
Let $\ep_n = \frac{1}{100\sqrt{\log n}}$ chosen with foresight. 
We break up the left-hand side of \eqref{eq:upperbound} into sums $T_1, T_2$ where 
\begin{itemize}
\item $T_1$ is the sum over all $y$ with $1\le N(y)\le \frac{n^{2}}{100\ep_n}$, 
\item $T_2$ is the sum over all $y$ with $\frac{n^{2}}{100\ep_n}\le N(y)\le \frac{{n \choose 2}^2}{1-\cos 2\pi/q}$.
 \end{itemize}

Observe that for these matrices $y$, 
$$0\le 1- \frac{(1-\cos 2\pi/q)N(y)}{{n \choose 2}^2}\le \exp\left (-\frac{4(1-\cos 2\pi/q) N(y)}{n^{4}}\right ).$$

{\it Bound $T_1$.} Note that by combining Lemmas \ref{lm:nonzerointerval:skeleton} and \ref{lm:nonzerointerval:nonzeroboxes}, every nonzero matrix $y$ has  $N(y)\ge (n-1)^{2}\ge (1-1/2) n^{2}$. Thus $T_1 \le T_{1, 1} + T_{1, 2}$ where
\begin{equation} 
T_{1, 1} = \sum_{w=1}^{1/(100 \ep_n)} \left |\left \{ y: \left (w-\frac{1}{2}\right )n^{2} < N(y)\le \left (w-40 w \ep_n\right )n^{2}\right \}\right | \exp\left (-\frac{8(1-\cos 2\pi/q)t \left (w-\frac{1}{2}\right )}{n^{2}}\right )\nonumber,
\end{equation}
\begin{equation}\label{key}
T_{1, 2}=\sum_{w=1}^{1/(100 \ep_n)} \left |\left \{ y: \left (w-40 w \ep_n\right )n^{2} < N(y)\le \left (w+\frac{1}{2}\right )n^{2}\right \}\right | \exp\left (-\frac{8(1-\cos 2\pi/q)t \left (w- 40 w \ep_n\right )}{n^{2}}\right )\nonumber.
\end{equation}

Since $\ep_n\le \frac{1}{80}$, applying the second part of Lemma \ref{lm:nonzeroboxes:small}, we deduce that
\begin{eqnarray}
T_{1, 1}&\le& \sum_{w=1}^{1/(100 \ep_n)} q^{16 w}n^{2w-2}  \exp\left (-\frac{8(1-\cos 2\pi/q)t \left (w-\frac{1}{2}\right )}{n^{2}}\right )\le  \sum_{w=1}^{\infty}  q^{(16 -4c\alpha) w } \le \frac{q^{-2c}}{8}\nonumber
\end{eqnarray}
and
 \begin{eqnarray}
T_{1, 2}&\le& \sum_{w=1}^{1/(100 \ep_n)} q^{32 w}n^{2w}  \exp\left (-\frac{8(1-\cos 2\pi/q)t \left (w-40 w \ep_n\right )}{n^{2}}\right )\le \sum_{w=1}^{\infty}  q^{(32 -c\alpha/4) w } \le \frac{q^{-2c}}{8}\nonumber
\end{eqnarray}
where we use the fact that $\alpha\ge 1$ and $c\alpha\ge 640\sqrt{\log n}$.
Thus, $T_1\le q^{-2c}/4$.

{\it Bound $T_2$.} 
We have $T_2 \le T_{2, 1}+T_{2,2}$ where
\begin{eqnarray}
T_{2, 1}= \sum_{w=1/(100 \ep_n)}^{\ep_n n} \left |\left \{ y: \left (w-\frac{1}{2}\right )n^{2} < N(y)\le \left (w+\frac{1}{2}\right )n^{2}\right \}\right | \exp\left (-\frac{8(1-\cos 2\pi/q)t \left (w-\frac{1}{2}\right )}{n^{2}}\right )\nonumber,
\end{eqnarray}
\begin{equation}\label{key}
T_{2, 2}= \sum_{w=\ep_n n}^{\infty} \left |\left \{ y: \left (w-\frac{1}{2}\right )n^{2} < N(y)\le \left (w+\frac{1}{2}\right )n^{2}\right \}\right | \exp\left (-\frac{8(1-\cos 2\pi/q)t \left (w-\frac{1}{2}\right )}{n^{2}}\right )\nonumber,
\end{equation}

Since $\ep_n\le \frac{1}{80}$, applying the first part of Lemma \ref{lm:nonzeroboxes:small} to $T_{2,1}$ yields
\begin{eqnarray}
T_{2, 1}\le \sum_{w=1/(100 \ep_n)}^{\infty} q^{32w}n^{2w+1+60 \ep_n w}  \exp\left (-\frac{8(1-\cos 2\pi/q)t \left (w-\frac{1}{2}\right )}{n^{2}}\right )\le   \frac{q^{-2c}}{8}\nonumber
\end{eqnarray}
 where we used the fact that $c\alpha\log q \ge 320 \ep_n \log n$.

Applying the first part of Lemma \ref{lm:nonzeroboxes:large} to $T_{2,2}$ yields
\begin{eqnarray}
T_{2, 1}\le \sum_{w=\ep_n n}^{\infty} n^{2w+C_1\ep_n w}  \exp\left (-\frac{8(1-\cos 2\pi/q)t \left (w-\frac{1}{2}\right )}{n^{2}}\right )\le \frac{q^{-2c}}{8}\nonumber. 
\end{eqnarray}
where $C_1$ is as in Lemma \ref{lm:nonzeroboxes:large} and we used the fact that $C_1\ep_n \le \frac{300\log q}{\sqrt{\log n}}$ and that $c\alpha\log q \ge 640\sqrt{ \log n}$. 
 Thus, $T_2\le q^{-2c}/4$ which together with the bound on $T_1$ complete the proof.
 \end{proof}

\bibliographystyle{plain}
\bibliography{contingency5}
\end{document}